\documentclass[12pt]{amsart} \usepackage{amscd,amssymb}

\newtheorem{theorem}{Theorem}
\newtheorem{lemma}[theorem]{Lemma}
\newtheorem{corollary}{Corollary}
\newtheorem{proposition}[theorem]{Proposition}

\theoremstyle{definition}                 
            
\newtheorem{remark}{Remark}

\usepackage{amsfonts}
\newcommand{\field}[1]{\mathbb{#1}}          \newcommand{\Q}{\field{Q}}
\newcommand{\R}{\field{R}}                   \newcommand{\Z}{\field{Z}}
\newcommand{\C}{\field{C}}

\newcommand{\D}{\Delta}                    
\newcommand{\g}{\gamma}                         \newcommand{\G}{\Gamma}

 \newcommand{\ra}{\rightarrow}

\begin{document}

\title[The Margulis-Zimmer Conjecture for non-uniform arithmetic groups]
{The Margulis-Zimmer Conjecture for non-uniform arithmetic groups}

\date{}

\author{ Yehuda Shalom,  T.N.Venkataramana, George Willis}

\maketitle{}

The goal of this article  is to investigate various aspects of the Margulis-Zimmer Conjecture on subgroups commensurated by higher rank lattices. In the case of non-uniform arithmetic subgroups of semi-simple Lie groups , the M-Z conjecture has already been proved in a sizeable number of cases. In this article, we first prove the M-Z conjecture for {\it all} higher rank non-uniform lattices. 

\begin{theorem} \label{maintheorem} Every infinite subgroup $N$ of a {\it non-cocompact} irreducible lattice $\G$ in a real semi-simple Lie group $G$ of real rank at least two,  which is commensurated by $\G$,  has finite index in $\G$.
\end{theorem}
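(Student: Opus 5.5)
By Margulis arithmeticity, $\G$ is arithmetic. Since $\G$ is non-cocompact, we may take $G=\mathbf G(\R)$ with $\mathbf G$ a $\Q$-simple, $\Q$-isotropic group, up to compact factors and finite index. Then $\G$ is commensurable with $\mathbf G(\Z)$. The plan is to show that a commensurated infinite subgroup $N$ must contain a finite-index subgroup of the group generated by unipotent integral points in opposite horospherical subgroups. We then conclude by the Raghunathan--Venkataramana theorem: for $\Q$-rank $\geq 1$ and real rank $\geq 2$, the group $E(\mathfrak q)$ generated by $U^+(\mathfrak q\Z)$ and $U^-(\mathfrak q\Z)$, for opposite maximal $\Q$-parabolics $P^\pm$ with unipotent radicals $U^\pm$, has finite index in $\mathbf G(\Z)$.

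The key steps, in order, are as follows.

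\textbf{Step 1 (reduction to normal subgroups).} Because $N$ is commensurated, the profinite-style completion argument applies. Form the totally disconnected locally compact group $H$, the Schlichting completion of $\G$ relative to $N$, together with the dense map $\G\to H$. In $H$ the closure $\overline N$ is compact open. If $H$ is compact, then $N$ has finite index after finitely many intersections, and we are done. So we aim to show that $H$ is compact, i.e. that $\G$ commensurates $N$ "trivially".

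\textbf{Step 2 (unipotents).} Show that $N\cap U^+(\Z)$ has finite index in $U^+(\Z)$, and likewise for $U^-$. For this, pick a $\Q$-split torus element, or rather an element $\g\in\G$ that normalizes $P^+$ and acts on $U^+(\Z)$ as a "contracting" automorphism in the arithmetic sense. Alternatively, use the fact that $U^+(\Z)$ is a finitely generated nilpotent group on which the Levi lattice $L(\Z)=\G\cap M$ acts with Zariski-dense, higher-rank image. The image of $U^+(\Z)$ in $H$ lies in a locally compact group normalized by the image of $L(\Z)$. Since $N\cap U^+(\Z)$ is commensurated by $L(\Z)$, and a commensurated subgroup of a lattice in a unipotent group stable under an irreducible arithmetic action is commensurable either with a rational $L$-invariant subgroup or with the trivial group, we get two cases: either $N\cap U^+(\Z)$ is commensurable with $V(\Z)$ for some $\Q$-subgroup $V\subset U^+$ normalized by $L$, or it is finite.

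\textbf{Step 3 (finishing).} We must rule out $N\cap U^\pm(\Z)$ being finite for all choices of $\Q$-parabolics. Since $N$ is infinite and $\G$ is irreducible, a standard argument should provide an element of $N$ of infinite order whose Zariski closure, together with its $\G$-conjugates commensurated into $N$, meets some unipotent radical nontrivially. One could try conjugating a suitable element by $\G$-elements that contract toward $U^+$, using that conjugates stay commensurable with $N$, and taking commutators of $n$ with $\g n\g^{-1}$, which lie in $N$ up to finite index. Once $N$ contains finite-index subgroups $V^\pm(\Z)$ with $V^\pm$ nontrivial and normalized by $L$, apply $\G$-conjugation, i.e. Weyl elements, to enlarge them to all of $U^\pm$. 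Then $N\supseteq E(\mathfrak q)$ up to finite index, and Raghunathan--Venkataramana gives finite index.

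\textbf{Main obstacle.} The main obstacle is Step 3: producing nontrivial unipotent elements in $N$ from mere infiniteness and commensuration. Commensuration alone does not allow conjugating $N$ into itself. I would first try to prove that $H$ has trivial quasi-center issues, i.e. to show that the contraction groups of the images of $\Q$-split torus elements in $H$ are trivial. This uses the fact that the image of $U^+(\Z)$ is contracted, which forces $\overline{U^+(\Z)}$ to be compact. By Step 2 that is exactly the statement that $N\cap U^+(\Z)$ has finite index. The final step then requires showing that $H$, being generated by compact images of opposite unipotents, is compact, which again follows from Raghunathan--Venkataramana.
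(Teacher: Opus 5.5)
There is a genuine gap, and it is exactly the step you call the main obstacle. Your plan has no mechanism relating unipotent elements to $N$, and that relation is the whole content of the theorem.

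\begin{itemize}
\item \textbf{Step 2 does not prove its claim.} The ``dichotomy'' is true: $N\cap U^+(\Z)$ is commensurable with $V(\Z)$ for an $L$-invariant $\Q$-subgroup $V$. But it allows $V=1$, which is precisely the case to be excluded. Even $V\neq 1$ may be a proper subgroup of $U^+$.
\item \textbf{No contraction is available.} A maximal $\Q$-split torus $S$ has $S(\Z)$ finite. Any $\g\in\G$ normalizing $P^+$ lies in $P^+(\Z)$ and only induces an automorphism of $U^+(\Z)$. The contraction picture belongs to settings with infinitely many units or $p$-adic places, not here.
\item \textbf{The compactness argument is circular.} Compactness of $\overline{U^+(\Z)}$ in $H$ is, as you note, equivalent to the conclusion of Step 2, so you cannot invoke it.
\item \textbf{The commutator trick gives nothing.} Commutators of $n$ with $\g n^k\g^{-1}$ give no control over unipotents.
\item \textbf{Your final deduction is invalid.} A t.d.l.c.\ group generated by compact subgroups need not be compact: $SL_2(\Q_p)$ is generated by $SL_2(\Z_p)$ and one conjugate of it.
\item \textbf{Higher rank is never used.} Steps 1--3 do not use real rank $\geq 2$, yet in rank one the bad case really occurs. $[\G(3),\G(3)]$ is an infinite, infinite-index normal subgroup of $SL_2(\Z)$ that meets every unipotent subgroup trivially, because cusp loops are nonzero in the first homology of the four-punctured sphere $\G(3)\backslash\mathbb{H}$. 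So whatever excludes the ``finite case'' must use higher rank essentially.
\end{itemize}

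The paper supplies exactly such a higher-rank input, and it aims at something weaker than your Step 2.
\begin{itemize}
\item By Lubotzky--Mozes--Raghunathan, real rank $\geq 2$ makes every unipotent $u\in U(O_K)$ exponentially distorted: $|u^n|=O(\log n)$.
\item The scale function satisfies $s(u)^n=s(u^n)\le C^{|u^n|}$, so it grows at most polynomially. This forces $s(u)=s(u^{-1})=1$, and by Shalom--Willis (Prop.~6.10) $u$ normalizes a compact open subgroup of the completion.
\item Bounded generation of $U(O_K)$ then yields a finite-index $\D_1\subset U(O_K)$ normalizing a single subgroup $N'$ commensurate with $N$.
\item The paper never shows that $N$ contains unipotents. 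Instead, $\D=\langle\D_1,N'\rangle$ is Zariski dense, since $N'$ is by Borel density and absolute simplicity. It also meets $U(O_K)$ in a finite-index subgroup, so $\D$ has finite index in $\G$ by Raghunathan/Venkataramana, which is your end-game theorem.
\item Finally, $N'$ is an infinite normal subgroup of $\D$, so it has finite index by the Margulis normal subgroup theorem.
\end{itemize}
This last step is what replaces the compactness of $\overline{U^\pm(\Z)}$ that your plan cannot reach.
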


In view of the Margulis arithmeticity theorem, this is equivalent to the following. 

\begin{theorem} \label{mainth} Let $G\subset SL_n$ be a linear semi-simple algebraic group defined over $\Q$ which is $\Q$-simple. Assume that $\R-rank (G)\geq 2$ and $\Q-rank (G)\geq 1$. Let $N\subset G(\Z)$ be an infinite subgroup whose commensurator in $G(\Z)$ has finite index in $G(\Z)$. Then $N\subset G(\Z)$ has finite index. 
\end{theorem}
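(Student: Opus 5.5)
The strategy is to produce enough unipotent elements inside $N$ and then apply the known results on subgroups of $G(\Z)$ generated by unipotents (Margulis, Tits, Raghunathan, Venkataramana). Throughout, we replace $\G$ by the finite index subgroup of $G(\Z)$ that commensurates $N$.

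\textbf{Step 1: reduce to Zariski dense $N$.} Since $\G$ commensurates $N$, it normalizes (up to finite index) the identity component $H$ of the Zariski closure of $N$. This works because commensurable subgroups have Zariski closures with the same identity component. Borel density then forces $H$ to be normalized by all of $G$. Because $G$ is $\Q$-simple, $H$ is either trivial or contains a $\Q$-simple factor, which we may take to be all of $G$. $H$ cannot be trivial, since then $N$ would be finite.

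\textbf{Step 2: produce unipotents in $N$.} We fix a maximal $\Q$-split torus and a proper parabolic $\Q$-subgroup $P$ with unipotent radical $U$. The lattice $U(\Z)$ is nontrivial because $\Q$-rank $\geq 1$. For each $u\in U(\Z)$, the commensurating property shows that $N\cap uNu^{-1}$ has finite index in $N$, so $u^k$ "almost" lies in the commensurator. The real task is to show that $N\cap U(\Z)$ is infinite, and then of finite index in $U(\Z)$.

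I would try a ping-pong or contraction argument. Take a $\Q$-split element $a$, or a rational element of $\G$ acting on $U$ by contraction. Then study the group $N_0=\bigcap_{g\in F}gNg^{-1}$ for suitable finite sets $F$; commensuration keeps $N_0$ of finite index in $N$. Next, take an element $n\in N$ whose Jordan decomposition has a nontrivial unipotent part, which exists by Zariski density. Conjugating by powers of $\gamma\in\G$ (using that $\gamma^m N\gamma^{-m}\cap N$ has finite index in $N$) and passing to limits in the $p$-adic or profinite completion should give unipotent elements.

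A cleaner route goes through the profinite completion. Let $\overline{N}$ be the closure of $N$ in the relative profinite completion $\G/\!/N$, which is totally disconnected locally compact. Then apply the Shalom--Willis result that commensurated subgroups correspond to homomorphisms into totally disconnected groups, together with a normal subgroup theorem or superrigidity for $\G$ into such groups. Since $\G$ has the congruence subgroup property in most $\Q$-rank $\geq1$ cases, the relative completion is essentially $G$ over finitely many $\Q_p$ modulo compact subgroups. From this one deduces that $N$ contains a congruence-type subgroup of $U(\Z)$ for each opposite pair of parabolics.

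\textbf{Step 3: conclude.} Once $N$ contains finite index subgroups of $U^+(\Z)$ and $U^-(\Z)$ for opposite $\Q$-parabolics, the Raghunathan--Venkataramana theorem applies. In $\R$-rank $\geq2$, $\Q$-rank $\geq1$, the group generated by $U^\pm(m\Z)$ has finite index in $G(\Z)$. Hence $N$ has finite index.

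The main obstacle is Step 2, getting a finite index subgroup of $U(\Z)$ inside $N$ from commensuration alone. Where the congruence subgroup property is unknown, such as some groups of $\Q$-rank one, I would first try to work directly in $U(\Z)$. The plan there is to show that $N\cap U(\Z)$ is commensurated by $P(\Z)$, and to use the irreducibility of the Levi action on $\mathrm{Lie}(U)$ to force this intersection to be either finite or of finite index. Infiniteness would come from conjugating a non-semisimple element of $N$ by a diagonalizable element that contracts toward $U$.
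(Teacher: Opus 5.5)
There is a genuine gap, and it sits exactly where you put the ``main obstacle''. Nothing in Step~2 shows that $N\cap U(\Z)$ is infinite, let alone of finite index in $U(\Z)$, and none of the mechanisms you suggest can do it.

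\begin{itemize}
\item Zariski density of $N$ does not produce an element with non-trivial unipotent part: $G(\Z)$ contains Zariski dense free subgroups all of whose non-trivial elements are loxodromic, hence semisimple.
\item The completion route is circular. The congruence subgroup property concerns finite-index subgroups of $\G$. The relative completion with respect to an infinite-index $N$ is non-compact, and its compact open subgroup is the profinite completion of $N$, not of $\G$. A ``superrigidity for $\G$ into totally disconnected groups'' that describes this completion amounts to the theorem itself; if you had it, you would get $[\G:N]<\infty$ at once, with no unipotents needed. The normal subgroup theorem says nothing about merely commensurated subgroups.
\item The fallback fails too. No element of $P(\Z)$ contracts $U$: $\g U(\Z)\g^{-1}=U(\Z)$ forces conjugation by $\g$ to preserve Haar measure on $U(\R)$, and the $\Q$-split torus has only finitely many integral points. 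Conjugating by elements outside the commensurator gives no information about $N$, and limits of conjugates $a^kna^{-k}$ do not land in the discrete group $N$.
\item The dichotomy ``a subgroup of $U(\Z)$ commensurated by $P(\Z)$ is finite or of finite index'' is false. Irreducibility of the algebraic Levi on $\mathrm{Lie}\,U$ does not pass to $L(\Z)$, which in $\Q$-rank one can be virtually cyclic. Take $K=\Q(\sqrt2)$, $f=x_0x_n+q(x_1,\dots,x_{n-1})$ with $n\ge3$ and $q$ definite at both real places, and $G=R_{K/\Q}SO(f)$; this has real rank $2$ and $\Q$-rank $1$. The anisotropic kernel $SO(q)$ is compact at both real places. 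So up to finite index $P(\Z)$ is $O_K^{n-1}\rtimes\langle\epsilon\rangle$, with $\epsilon$ a fundamental unit acting by scalars. Every $O_K$-line in $U(\Z)\cong O_K^{n-1}$ is then infinite, of infinite index, and normalised by a finite-index subgroup of $P(\Z)$.
\end{itemize}

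The missing idea is to weaken the target of Step~2. You do not need unipotents \emph{in} $N$, only a finite-index subgroup of $U(\Z)$ that \emph{normalises} a subgroup commensurate with $N$. The paper gets this from the higher-rank hypothesis through distortion. By Lubotzky--Mozes--Raghunathan, a unipotent $u\in U(\Z)$ has word length $|u^m|=O(\log m)$. By Shalom--Willis, Prop.~6.10, this distortion forces $u$ to normalise a compact open subgroup of the Belyaev completion $\widehat{\G}$. Bounded generation of $U(\Z)$ then upgrades this to a finite-index $\D_1\subset U(\Z)$ normalising a single $N'$ commensurate with $N$.

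From there the argument closes as follows. The group $\D=\langle\D_1,N'\rangle$ is Zariski dense (your Step~1) and meets $U(\Z)$ in a finite-index subgroup. So it has finite index in $G(\Z)$ by Raghunathan/Venkataramana; one unipotent radical suffices, so you do not need $U^\pm$. Since $\D$ normalises the infinite group $N'$, Margulis' normal subgroup theorem gives $[\D:N']<\infty$. Your Step~1 and the generation result in Step~3 survive, but the decisive input is missing from your plan: the normal subgroup theorem applied to $\langle\D_1,N'\rangle$, which replaces your Step~2.
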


\begin{remark} If $\Q-rank (G)\geq 2$, then this is proved in \cite{Ven 1} using unipotent elements. For $G=R_{K/\Q}({\mathbb  G })$ ($R$ is the Weil restriction of scalars) where $\mathbb G$ is a Chevalley group over a number field $K$  with $K-rank ({\mathbb G})\geq 2$ it is proved in \cite{Sha-Wil}  by using the notion of flat subgroups of commensurators (this latter proof can be extended to prove the above theorem). Therefore, the new part of the above theorem is when $\Q-rank (G)=1$ (even in this case, [Sha-Wil] have proved Theorem \ref{mainth}  when $\mathbb G=SL_2$ over a number field $K$ such that the ring of integers in $K$ has infinitely many units). \\

Let us give some new examples of higher rank lattices which are {\it not} covered by the cited works \cite{Ven 1}  and \cite{Sha-Wil} : $G=SO(n,1)(\Z[{\sqrt 2}])$ for $n\geq 4$, or $G=SL_2(O_D)$ where $O_D$ is an order in a {\it cubic division algebra } $D$ with centre $\Q$, or $G=SU(n,1)(\Z[{\sqrt 3}])$ for $n\geq 2$. These groups have $\Q-rank$ one but higher real rank, and they are not isogenous to $SL_2$ over the ring of integers of a number field with infinitely many units. Theorem \ref{maintheorem} applies to these lattices. \\

The proof here is uniform and does not use the specific nature of the discrete group. The proof is an adaptation of the proof in \cite{Sha-Wil} but instead of using bounded generation for the whole lattice, we use the much weaker bounded generation for integral unipotent radicals of arithmetic groups. \\
\end{remark} 

Every $\Q$-simple group $G$ is a Weil restriction of scalars of the form $G=R_{K/\Q}({\mathcal G})$ where $\mathcal G$ is an absolutely simple algebraic group defined over a number field $K$. We can thus replace $G$ by an absolutely simple algebraic group $G$ -we denote it again by $G$ - defined over a number field $K$. The assumption on the ranks on $G$ translates to $K-rank (G)\geq 1$ and $\infty-rank (G): = \sum _{v\in \infty} K_v-rank (G) \geq 2$ where $\infty$ denotes the set of inequivalent {\it archimedean places} of $G$. We restate Theorem \ref{mainth} as follows.

\begin{theorem} \label{reformulate} Let $G$ be an absolutely simple linear algebraic group defined over a number field $K$ with 
\[K-rank (G)\geq 1, \quad \infty-rank (G)=\sum K_v-rank (G)\geq 2.\] Let $N\subset G(O_K)$ be an infinite subgroup of $G(O_K)$ which is commensurated by $G(O_K)$. Then $N$ has finite index in $G(O_K)$. 
\end{theorem}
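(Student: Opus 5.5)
The plan is to adapt the Shalom--Willis argument via totally disconnected completions. Let $\Lambda = G(O_K)$ and let $\overline{\Lambda}$ be the relative profinite completion (Schlichting completion) of $\Lambda$ with respect to the commensurability class of $N$. This is a totally disconnected locally compact group $H$ with a homomorphism $\phi:\Lambda\ra H$ with dense image, and the closure $U$ of $\phi(N)$ is a compact open subgroup. $N$ has finite index in $\Lambda$ if and only if $H$ is compact. If $N$ is normal up to commensurability this is Margulis' normal subgroup theorem, so we may assume we are in the genuinely non-normal case. The goal is then to show that $H$ is compact, assuming $N$ is infinite.

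The first step uses $K$-rank $\geq 1$. Fix a proper $K$-parabolic $P$ with unipotent radical $V$, together with its opposite $V^-$. The integral points $V(O_K)$ and $V^-(O_K)$ generate a finite index subgroup of $\Lambda$; this is the theorem of Raghunathan--Venkataramana for $K$-rank $\geq 1$. It therefore suffices to show that $\phi(V(O_K))$ and $\phi(V^-(O_K))$ have compact closures in $H$, and then to show that the group they generate is relatively compact.

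For the first part I would use the key weak bounded generation. $V(O_K)$ is a finitely generated nilpotent group, boundedly generated by finitely many abelian subgroups, namely root group pieces. A central torus element $a\in T(O_K)$ acting on $V$ by expansion exists because $\infty$-rank$\geq 2$ provides infinitely many units, or, in the genuinely $K$-rank-one case, a nontrivial $K$-split torus. Conjugation by $a$ contracts $V$ in the sense of Willis. In $H$, the scale function and tidy subgroups show that for a suitable power $a^m$ the contraction group of $\phi(a)$ contains $\phi$ of a finite index subgroup of each root subgroup, or else $\phi(a)$ normalizes a compact open subgroup containing it. Here the flat-group argument of Shalom--Willis applies. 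The higher real rank supplies a commuting family of such elements, i.e. a rank $\geq 2$ abelian subgroup of $T(O_K)$ (or a $T$ that, with its centralizer, gives a flat subgroup of $H$). The flat rank analysis then forces the contraction groups on root subgroups to be relatively compact: each root subgroup $V_\alpha(O_K)$ can be conjugated by elements of the flat group in two independent directions, one contracting and one normalizing. This makes $\overline{\phi(V_\alpha(O_K))}$ compact. Bounded generation of $V(O_K)$ by finitely many such abelian pieces then gives that $\phi(V(O_K))$ is a bounded product of compact sets, hence relatively compact. The same holds for $V^-$.

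The main obstacle is the second part: passing from compactness of $\overline{\phi(V(O_K))}$ and $\overline{\phi(V^-(O_K))}$ to compactness of $H$. The generated group is not a bounded product a priori. I would try to use that $N$ is infinite, together with Margulis' superrigidity applied to the continuous-dense homomorphism into $H$. Since $\phi(V(O_K))$ is relatively compact, after passing to a finite index subgroup it lies in a compact open subgroup $W$. Then I would argue, via Howe--Moore / Mautner type arguments for the unipotent subgroups inside $\Lambda$ (or via Kazhdan's property (T) when available, otherwise via the rank-one-at-$K$ but higher total rank structure), that $\Lambda$ acting on $H/W$ has a finite orbit. If this fails, the alternative is to show directly that $\phi$ extends to a continuous homomorphism from a suitable $S$-arithmetic completion $G(K_S)$. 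The unipotents being relatively compact would force the image to factor through a compact quotient, and hence $H$ is compact.
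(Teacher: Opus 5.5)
Your proposal has a genuine gap. It lies in the step you treat as routine, not the one you flag.

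\textbf{Step 1 is not established.} Compactness of $\overline{\phi(V(O_K))}$ is supposed to come from an infinite-order element $a\in T(O_K)$ contracting $V$, and from a rank $\geq 2$ abelian group of such elements. But $\infty$-rank $\geq 2$ is a condition on archimedean ranks and produces no units. For the $K$-split torus, $T(O_K)$ is commensurable with a power of $O_K^*$, which is finite when $K=\Q$ or $K$ is imaginary quadratic.

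These are precisely the new cases of the theorem, e.g.\ $SL_2(O_D)$ with $D$ a cubic division algebra over $\Q$. There $T(O_K)$ is finite, and the elements of $L(O_K)$ preserve a lattice in $\mathrm{Lie}\,V$, so none of them contracts all of $V$.

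Even when such an $a$ exists, algebraic or real contraction tells you nothing about the dynamics of $\phi(a)$ in the totally disconnected group $H$. Your dichotomy for $\phi(a)$, and the claim that a ``flat rank analysis'' forces $\overline{\phi(V_\alpha(O_K))}$ to be compact, are assertions, not consequences of scale theory.

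\textbf{Step 1 is essentially the whole theorem.} Suppose $\overline{\phi(V(O_K))}$ were compact. Then $N\cap V(O_K)$ has finite index in $V(O_K)$, since $\phi^{-1}(\overline{\phi(N)})=N$. Moreover $N$ is Zariski dense: the identity component of its Zariski closure depends only on the commensurability class of $N$, so it is normalized by $\Lambda$, hence by $G$ (Borel density), hence equals $G$. Lemma~\ref{unipotent generators} then finishes at once.

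So your ``second part'' is the easy one. The tools you propose for it are not available in general anyway. Property (T) fails for irreducible lattices in $SO(n,1)\times SO(n,1)$. Extending $\phi$ to $G(K_S)$ is a superrigidity statement you do not have.

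\textbf{The missing idea} is to aim for something weaker than relative compactness.
\begin{enumerate}
\item By Lubotzky--Mozes--Raghunathan, a unipotent $u\in U(O_K)$ satisfies $|u^n|=O(\log n)$ in the higher rank lattice.
\item For a compact open $W\subset H$ one has $[W:W\cap gWg^{-1}]\le C^{|g|}$, and $s(g^n)=s(g)^n$. Together with the logarithmic bound this forces $s(\phi(u)^{\pm 1})=1$.
\item Hence $\phi(u)$ normalizes a compact open subgroup (Shalom--Willis, Prop.~6.10).
\item Bounded generation of $U(O_K)$ upgrades this to a finite index $\D_1\subset U(O_K)$ normalizing some $N'$ commensurate with $N$ (Lemma~\ref{unipotent product}).
\item Then $\D=\langle\D_1,N'\rangle$ is Zariski dense and meets $U(O_K)$ in a finite index subgroup. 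By Lemma~\ref{unipotent generators} it has finite index in $G(O_K)$.
\item $N'$ is an infinite normal subgroup of the higher rank lattice $\D$, so Margulis' normal subgroup theorem gives that it has finite index.
\end{enumerate}
This route needs no tori, no compactness of $H$, and no property (T).
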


Theorem \ref{reformulate} can be generalised to $S$-arithmetic groups as well. Let $S\supset \infty$ be a finite set of places of $K$ and $O_S$ the subring of the field $K$ which remain integral at all places of $K$ outside $S$. 

\begin{theorem} \label{S arithmetic} Suppose $G$ is an absolutely simple linear algebraic group defined over a number field $K$ with $K-rank (G)\geq 1$ and $\infty-rank (G)\geq 2$.  Suppose $N\subset 
G(O_S)$ is an infinite  commensurated subgroup. Then the intersection $N\cap G(O_K)$ has finite index in $G(O_K)$. 
\end{theorem}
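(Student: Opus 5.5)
The plan is to reduce Theorem \ref{S arithmetic} to Theorem \ref{reformulate}, applied to the subgroup $N_0 = N\cap G(O_K)$ of $\G_0 = G(O_K)$. Since $N$ is commensurated by $G(O_S)$ and $G(O_K)\subset G(O_S)$, for every $\g\in G(O_K)$ the groups $N$ and $\g N\g^{-1}$ are commensurable. Intersecting both with $G(O_K)$, which $\g$ normalizes, shows that $N_0$ and $\g N_0 \g^{-1}$ are commensurable. Hence $N_0$ is commensurated by $G(O_K)$. Once $N_0$ is known to be infinite, Theorem \ref{reformulate} gives that $N_0$ has finite index in $G(O_K)$, which is the claim. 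So everything comes down to showing that $N\cap G(O_K)$ is infinite.

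To show this, I would work with unipotent elements, since $K$-rank$(G)\geq 1$. Fix a minimal $K$-parabolic $P$ with unipotent radical $U$, and let $U^-$ be the opposite unipotent radical. The first step is to show that $N$ meets $U(O_S)$ in an infinite set. For this I would follow the \cite{Sha-Wil}-type argument that the paper adapts for Theorem \ref{reformulate}: form the completion $H$ of $G(O_S)$ relative to the commensurated subgroup $N$ (the Schlichting completion). This is a totally disconnected locally compact group in which the closure of $N$ is compact open. If $N$ contained only finitely many elements of $U(O_S)$ (up to finite index), then the image of $U(O_S)$ in $H$ would be discrete and infinite. Conjugation by a $K$-split torus element $t\in G(O_S)$ that contracts $U$ at the archimedean places then gives a contradiction, using the flat-subgroup/scale-function argument together with bounded generation of $U(O_S)$ by root-group elements. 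If that route is too heavy, an alternative first step is a Margulis normal-subgroup-type argument: since $N$ is infinite and commensurated, the $G(O_S)$-orbits of conjugates of $N$ force $N$ to have Zariski-dense image, and Zariski density would then give unipotent elements after passing to suitable commutators.

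Given that $N\cap U(O_S)$ is infinite, the second step is to pass to $O_K$-points. Let $\Lambda = N\cap U(O_S)$, and write $m$ for a common denominator of a finite generating set of $\Lambda$ (a finite set suffices, since $U(O_S)$ is finitely generated nilpotent). Conjugating by a suitable element of $T(O_S)$, which lies in the commensurator of $N$, rescales the $S$-adic denominators. The conjugate is commensurable with $N$, so a finite index subgroup of $t\Lambda t^{-1}$ lies in $N$. Choosing $t$ to contract at the finite places in $S$, this subgroup lies in $N\cap U(O_K)$, and it is infinite. This establishes that $N_0$ is infinite, and Theorem \ref{reformulate} finishes the proof.

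The main obstacle is the first step: producing infinitely many unipotent elements in $N$ from the bare hypothesis that $N$ is infinite. Here the higher $\infty$-rank hypothesis must enter. I would try the completion/scale argument first, following \cite{Sha-Wil}, with bounded generation of $U(O_S)$ replacing bounded generation of the whole group.
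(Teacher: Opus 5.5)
Your reduction is correct and matches the paper's endgame. $N\cap G(O_K)$ is commensurated by $G(O_K)$, so by Theorem~\ref{reformulate} it suffices to show that it, or $N'\cap G(O_K)$ for some $N'$ commensurable with $N$, is infinite. Your second step is essentially fine but over-engineered: every unipotent element of $G(O_S)$ has a power in $G(O_K)$. So a single nontrivial element of $N\cap U(O_S)$ already gives an infinite subgroup of $N\cap U(O_K)$, and no torus is needed. Also, your assertion that $U(O_S)$ is finitely generated is false once $S$ contains a finite place, since a root group contributes a copy of $O_S$.

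The genuine gap is your first step, which carries all the weight and is not actually argued.
\begin{itemize}
\item You never say what contradiction a discrete image of $U(O_S)$ plus a contracting torus element is supposed to produce.
\item The tools you name are not available for $U(O_S)$. It is not boundedly generated. Moreover, the conclusion the Shalom--Willis flat-subgroup argument would give (a finite-index subgroup normalising some subgroup commensurable with $N$) is false for $U(O_S)$. Take $N=G(O_K)$: any such $N'$ has compact open closure $W$ in $\prod_{v\in S_f}G(K_v)$. Then $W$ would be normalised by the closure of a finite-index subgroup of $U(O_S)$, which is all of $U(K_{S_f})$. That is impossible, because conjugating a small nontrivial element of the opposite unipotent group by large elements of $U(K_v)$ leaves every compact set.
\item Your fallback fails too. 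Zariski density yields no unipotent elements, since Zariski-dense free subgroups all of whose nontrivial elements are semisimple exist, and taking commutators does not change this.
\end{itemize}

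The missing idea is that one never needs unipotent elements in $N$ at all. The paper applies Lemma~\ref{unipotent product} to the \emph{integral} group $U(O_K)\subset G(O_S)$. This is where $\infty$-rank$(G)\geq 2$ enters, via the logarithmic distortion of $U(O_K)$ from \cite{Lu-Mo-Ra}. It yields a finite-index subgroup $\Delta_1\subset U(O_K)$ normalising some $N'$ commensurable with $N$. Take $n\in N'$ and $u\in\Delta_1$, and replace $u$ by a power so that $nun^{-1}\in G(O_K)$. Then the commutator $c=nun^{-1}u^{-1}=n\cdot(un^{-1}u^{-1})$ lies in $N'\cap G(O_K)$. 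For suitable choices it has infinite order: for example, take $n$ not commuting with $u$ (possible by Zariski density of $N'$) inside a torsion-free finite-index subgroup. Hence $N'\cap G(O_K)$ is infinite, and Theorem~\ref{reformulate} finishes the proof.
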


Note that in Theorem \ref{S arithmetic} we  assume that the group $G(O_K)$ of integer points is itself a higher rank lattice (i.e. that $\infty-rank(G)\geq 2$); we are unable to handle the case when $S-rank (G)\geq 2$ and $\infty -rank (G)= 1$. For example, we do not know if the analogue of Theorem \ref{S arithmetic} holds for  the group $SL_2(\Z[1/p])$.  \\ 

Theorem \ref{maintheorem} has implications to rigidity of homomorphisms of higher rank lattices into arbitrary locally compact groups. We cite an example here.

\begin{theorem} \label{rigidity} Fix an integer $n\geq 3$ and let $\rho : \G \ra H$ be a homomorphism from a finite index subgroup $\G$ of $SL_n(\Z)$ into a locally compact group $H$. Then either the image of $\rho$ is discrete or else the closure of the image of $\rho$ is a compact profinite group, and is in fact a quotient of the congruence completion of $\G$. 
\end{theorem}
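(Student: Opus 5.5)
We reduce Theorem \ref{rigidity} to Theorem \ref{maintheorem} by the standard ``Shalom--Willis'' device for homomorphisms into locally compact groups. Replace $H$ by the closure $\overline{\rho(\G)}$, so that $\rho(\G)$ is dense in $H$. By van Dantzig's theorem, the identity component question has to be settled first. We would first handle the connected part by passing to $H/H^0$. The group $H^0$ is a connected locally compact group, hence (Gleason--Montgomery--Zippin--Yamabe) an inverse limit of Lie groups. We would then use Margulis superrigidity (or property (T) together with the normal subgroup theorem) to see that a dense image of $\G$ meeting a connected Lie quotient must come from a continuous homomorphism of $SL_n(\R)$; density in a connected group would then force $\rho$ to have an image with non-discrete behaviour incompatible with the conclusion unless the connected part is trivial or the image is discrete. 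Concretely, either $\ker\rho$ is infinite, in which case it has finite index by Margulis' normal subgroup theorem and the image is finite (hence discrete), or $\rho$ has finite kernel and we analyse $H$.

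The main step is in the totally disconnected case. Here van Dantzig gives a compact open subgroup $U\subset H$, and we put $N=\rho^{-1}(U)\subset\G$. Since $U$ is commensurated by $H$ (compact open subgroups are commensurated because $gUg^{-1}\cap U$ is open in the compact group $U$, hence of finite index), $N$ is commensurated by $\G$. If $N$ is finite, then $\rho(\G)\cap U$ is finite. Shrinking $U$ inside this finite set's complement, we get $\rho(\G)\cap U'=\{1\}$ for some open $U'$, so $\rho(\G)$ is discrete. If $N$ is infinite, Theorem \ref{maintheorem} (with $\G$ non-cocompact in $SL_n(\R)$, rank $n-1\ge2$) gives that $N$ has finite index in $\G$. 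Then $\rho(\G)\subset$ finitely many cosets of $U$, so by density $H$ is a finite union of cosets of $U$ and is compact. Running the same argument for every compact open subgroup $U$ (these form a neighbourhood basis), each $\rho^{-1}(U)$ is infinite and thus of finite index. Hence $\rho$ is continuous for the profinite topology on $\G$, and $H$ is a quotient of the profinite completion of $\G$. By the congruence subgroup property for $SL_n(\Z)$, $n\ge3$ (Bass--Milnor--Serre), every finite index subgroup is a congruence subgroup. So $H$ is a quotient of the congruence completion.

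The main obstacle is the reduction to the totally disconnected case, i.e.\ dealing with $H^0$. I would argue as follows. $\G$ has property (T), so the image in the Lie quotients of $H^0$-related pieces is controlled. Pass to $H/H^0$ and apply the above to get that $\rho(\G)H^0/H^0$ is either discrete or compact profinite. In the compact case, a finite index subgroup of $\G$ maps densely into the connected group $H^0$ modulo compact normal subgroups, which is a connected Lie group $L$. By superrigidity, a Zariski dense image of a finite index subgroup of $SL_n(\Z)$ in a Lie group with non-compact closure extends to $SL_n(\R)$ and is then discrete. If instead the closure is compact, the image lies in a compact Lie group. Superrigidity then forces the image to be finite, since $SL_n(\Z)$ has only finite-image representations into compact groups, because its rational structure is $\Q$ with a single archimedean place. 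Thus $H^0$ is trivial modulo a compact piece that must itself be trivial by density, and the dichotomy follows. In the discrete case for $H/H^0$, the kernel lands in $H^0$, and the same superrigidity argument applied there shows the image is discrete.
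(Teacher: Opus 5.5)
Your treatment of the totally disconnected case is correct and is essentially the paper's use of Theorem \ref{maintheorem}. The preimage of a compact open subgroup is commensurated, so it has finite index; compactness, continuity for the profinite topology and the congruence subgroup property then finish. You are in fact more careful than the paper about the case where $\rho^{-1}(U)$ is finite.

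The gap is in disposing of $H^0$, most concretely in your final step ``$H^0$ is trivial modulo a compact piece that must itself be trivial by density.'' Density of $\rho(\Gamma)$ in $H$ says nothing about a compact normal subgroup such as $H^0\cap C$: it is not open, and $\rho(\Gamma)$ may a priori meet it trivially. The missing idea is Peter--Weyl, which is how the paper concludes. Once $H$ is known to be compact, each finite-dimensional unitary representation $\theta$ of $H$ has image a compact Lie group containing $\theta\rho(\Gamma)$ densely. Finite-index subgroups of $SL_n(\Z)$ have only finite images in compact \emph{Lie} groups; this must be stated for Lie targets, since $\Gamma\subset SL_n(\widehat{\Z})$ shows it fails for general compact groups. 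Hence $\theta(H)$ is finite, so $H$ is profinite and $H^0=1$.

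Two earlier steps also need real arguments. First, when $H/H^0$ is infinite, $H^0$ is not open. So $\rho^{-1}(H^0)$ need not have finite index, and $\rho(\Gamma)\cap H^0$ need not be dense in $H^0$. To get a finite-index subgroup of $\Gamma$ mapping densely into a connected Lie group, you need a Gleason--Yamabe open subgroup $U\supseteq H^0$ with compact normal subgroup $C$, $U/C$ connected and $U=CH^0$ (the paper's Proposition). Compactness of $H/H^0$ makes $U$ of finite index, and $\rho^{-1}(U)$ then maps densely onto $U/C\cong H^0/(H^0\cap C)$.

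Second, $L=U/C$ is an arbitrary connected Lie group. ``Zariski dense'' and superrigidity only apply to the simple factors of $L/\mathrm{Rad}(L)$ modulo its centre; this is the paper's Lemma. The radical has to be killed separately, e.g.\ because a solvable image of a property (T) group is amenable with (T), hence finite, and a connected group containing a finite dense subgroup is trivial.

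With these repairs your route works and is somewhat more direct than the paper's. $L=1$ gives $U=C$, a compact open subgroup of finite index, so $H$ is compact. This bypasses the decomposition $H'/N=Q\times H'/H^0$ along the amenable radical $N$ of $H^0$ and the step ``amenable plus (T) implies compact''. Your ``discrete'' case needs no superrigidity at all. If some compact open $V\subset H/H^0$ has finite preimage in $\Gamma$, then $H/H^0$ is discrete, so $H^0$ is open. Then $\rho(\Gamma)\cap H^0$ is finite and dense in $H^0$, forcing $H^0=1$.
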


We prove a more general version for general higher rank $S$-arithmetic lattices in Section \ref{homomorphisms}. \\

In a different direction, we prove in section 5, that if $K$ is a global field of positive characteristic, and $G$ is a Chevalley group over $K$ of $K-rank \geq 2$, $v$ a place of $K$, $O$ the subring of $K$ which are integral over all places of $K$ not in equivalent to $v$,  and $\G$ is commensurate to $G(O)$, then 

\begin{theorem} \label{function field} Every infinite commensurated subgroup $N$ of $\G$ has finite index in $\G$. 
\end{theorem}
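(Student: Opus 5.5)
We work in the function field setting, where $K$-rank$(G)\geq 2$ and $G$ is a Chevalley group, so the natural tool is the unipotent method of \cite{Ven 1} rather than the flat-subgroup approach. Write $\G$ for a group commensurate with $G(O)$ and $N\subset \G$ for an infinite commensurated subgroup. Replacing $\G$ by $\G\cap G(O)$ and $N$ by $N\cap \G$ changes neither the hypothesis nor the conclusion, since commensuration and finite index are preserved under passing to finite index subgroups. So we may assume $\G\subset G(O)$ has finite index. The aim is to show that $N$ contains a subgroup of the form $E(\mathfrak q)$: the group generated by the $\mathfrak q$-points of all root subgroups $U_\alpha$, for some nonzero ideal $\mathfrak q$ of $O$.

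\textbf{Step 1: $N$ meets some root subgroup in a finite index subgroup of $U_\alpha(O)$.} First, $N$ is Zariski dense and in fact unbounded at $v$. Indeed, for each $\g\in\G$ the group $\g N\g^{-1}\cap N$ has finite index in $N$, so the Zariski closure $H$ of $N$ has identity component normalized by $\G$. By Borel density in positive characteristic ($\G$ is a lattice in $G(K_v)$ because $K$-rank $\geq 1$), $H^0$ is normal in $G$. If $H^0$ were trivial, $N$ would be finite, so $H^0=G$. Next, pick a non-central semisimple element of $N$ that is regular and $K$-split up to conjugacy; alternatively, follow \cite{Ven 1} and take $n\in N$ together with a unipotent $u\in U_\alpha(O)$. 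Commensuration gives that $u^k n u^{-k}\in N$ modulo a finite index subgroup for suitable $k$, so the commutators $[u^{k},n]$, and iterated commutators built from them, lie in $N$ after passing to finite index. Using the Chevalley commutator relations and a suitable contracting torus element $t\in G(O)$, which exists since $O$ has infinitely many units when $v$ is the only place at infinity only in special cases; otherwise we use elements of $\G$ normalizing a parabolic, I expect to produce a nontrivial element of $N\cap U_\alpha(O)$. Then, since $U_\alpha(O)\cong O$ is commensurated, the subgroup $N\cap U_\alpha(O)$ is a nonzero additive subgroup of $O$ stable, up to commensurability, under multiplication by the action of the torus. In positive characteristic, $O$ is not finitely generated over $\Z$, so \emph{finite index} has to be argued using the $O$-module structure coming from conjugation by $T(O)$ and by the other root groups, through the relation $[x_\beta(s),x_\gamma(t)]=x_{\beta+\gamma}(\pm st)$ when rank $\geq 2$. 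This shows that the additive group $\{s : x_{\beta+\gamma}(s)\in N\}$ contains products of ideals, and hence a nonzero ideal $\mathfrak q$.

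\textbf{Step 2: propagation and conclusion.} Because the Weyl group acts transitively on roots of the same length and is represented in $G(O)$, conjugating by representatives shows that every root group $U_\beta(\mathfrak q')$ lies in $N$ for some nonzero ideal $\mathfrak q'$. Here commensuration is used again: $wNw^{-1}$ is commensurate with $N$, so $N$ contains a finite index subgroup of $wU_\alpha(\mathfrak q)w^{-1}$, and a finite index additive subgroup of an ideal that is stable under the units again contains an ideal. Hence $N\supset E(\mathfrak q'')$. Finally, $E(\mathfrak q'')$ has finite index in $G(O)$ by the positive characteristic analogue of the results of Raghunathan and Venkataramana on unipotent generation for $K$-rank $\geq 2$, which is essentially the congruence subgroup/normal subgroup theory for Chevalley groups of rank $\geq 2$ (Bass--Milnor--Serre, Matsumoto). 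Therefore $N$ has finite index.

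\textbf{Main obstacle.} The hard step is Step 1, namely getting a nontrivial element of $N$ inside a root group and then upgrading it to an ideal. The issue is that in positive characteristic the additive groups are infinite-dimensional $\mathbb F_p$-vector spaces, so a finite index subgroup of an ideal need not contain an ideal unless we exploit stability under a multiplicative set. I would get this stability from the commutator relation with a second root (rank $\geq 2$), which turns commensurated additive subgroups into ones closed under multiplication by a finite index subring of $O$.
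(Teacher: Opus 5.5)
The paper gives no proof of this theorem to compare against: its Section 5 is empty, and the only hint is that the argument goes ``via Steinberg relations for elementary matrices''. Your outline fits that hint, but it has a genuine gap at the step that carries all the content.

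First, the tools you invoke in Step 1 are not available. With a single place $v$, Dirichlet's $S$-unit theorem (with $S=\{v\}$) gives $O^\times=\mathbb{F}_q^\times$. So $T(O)$ is finite, and $G(O)$ never contains a contracting torus element, not merely ``in special cases''. Moreover, every nontrivial element of $U_\alpha(O)$ has order $p$. Each such element therefore trivially normalizes a finite-index subgroup of $N$ (intersect its $p$ conjugates). For the same reason, a single nontrivial element of $N\cap U_\alpha(O)$, or any finite piece of it, is worthless: $N$ is residually finite, so that piece disappears after passing to a finite-index subgroup of $N$. What your strategy actually needs is that $N\cap U_\alpha(O)$ has \emph{finite index} in $U_\alpha(O)\cong(O,+)$ for some root $\alpha$. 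Equivalently, the closure of $U_\alpha(O)$ in the Belyaev completion must be compact. This is a uniformity statement over an infinitely generated torsion group. That is exactly where the characteristic-zero tools used elsewhere in the paper (scale function, logarithmic distortion, flat subgroups of finitely generated unipotent groups) give nothing, and ``I expect to produce'' supplies no argument for it.

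Second, the upgrading step fails as written. For each fixed $t\in O$, commensuration gives a finite-index subgroup $A^{(t)}\subseteq A_\beta=\{s: x_\beta(s)\in N\}$ with $tA^{(t)}\subseteq A_{\beta+\gamma}$. But $A^{(t)}$ depends on $t$, and $(O,+)$ is an infinite-dimensional $\mathbb{F}_p$-vector space, so you cannot intersect over finitely many additive generators as one does over $O_K$. Nothing in your argument yields closure under a subring. The claim in Step 2 that a finite-index subgroup of an ideal which is stable under the units contains an ideal is false. Stability under $\mathbb{F}_q^\times$ only says it is an $\mathbb{F}_q$-subspace. For example, the kernel of $\sum_i a_ix^i\mapsto\sum_{k\geq 0}a_{k^2}$ has index $q$ in $\mathbb{F}_q[x]$ but contains no nonzero ideal: an ideal $f\,\mathbb{F}_q[x]$ inside it would make the indicator sequence of the squares linearly recurrent over $\mathbb{F}_q$, hence eventually periodic.

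This second point can be repaired once every $A_\alpha$ is known to have finite index. For $SL_n$, apply $[x_{ij}(s),x_{jk}(t)]=x_{ik}(st)$ to elements of $N$ itself; no commensuration is needed. This shows each $A_\alpha$ contains a finite-codimension subring of $O$, and the conductor of that subring is a nonzero ideal. But the repair presupposes exactly the missing finite-index statement. Also, $[x_\beta(s),x_\gamma(t)]=x_{\beta+\gamma}(\pm st)$ is not the general commutator formula: for non-simply-laced types extra terms appear, and some structure constants ($\pm2$ or $\pm3$) vanish in characteristic $2$ or $3$.
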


The method of proof here is via Steinberg relations for elementary matrices.

\newpage
 \section{Generalities on Algebraic Groups }

\subsection{Unipotent Groups}

Suppose $U \subset SL_n$ is a unipotent algebraic group defined over a number field  $K$. The integral unipotent group $U(O_K)$ may be viewed as being commensurate to $U_0(\Z)$ where $U_0=R_{K/Q}(U)$ is the group obtained from $U$ by restriction of scalars. The group $U_0(\Z)$ has bounded generation: that is, there exist elements $u_1,u_2, \cdots, u_k\in U(O_K)$ such that 
\[U_0(\Z)\simeq  U(O_K)= u_1^\Z u_2 ^\Z \cdots u_k ^\Z.\]

Moreover, by \cite{Ragh 2}, any Zariski dense subgroup $\D'$ of $U_0(\Z)$ has finite index in $U_0(\Z)$: part (2) of Theorem (2.1) of \cite{Ragh 2} says that  $U_0(\R)/\D'$ is compact; since $U_0(\Z)/\D'$ is a discrete subset of $U_0(\R)/\D'$, it is finite and hence $\D'\subset U_0(\Z)$ has finite index.  \\

\subsection{Generalities on Groups with $K-rank (G)\geq 1$.} 

Let $P$ be a  parabolic $K$-subgroup with unipotent radical $U$ and fix a Levi decomposition $P=LU$ with $L$ defined over $K$. We will need the following result from \cite{Ragh1} (for groups $G$ with $K-rank (G) \geq 2$) and \cite{Ven 2} (for groups $G$ with $K-rank (G)=1$).

\begin{lemma} \label{unipotent generators} Suppose $G$ is absolutely almost simple with $K-rank (G)\geq 1$ and $\infty-rank (G)\geq 2$. Suppose $P$ is a  parabolic subgroup with unipotent radical $U$. If $\G \subset G(O_K)$ is a Zariski dense subgroup which intersects $U(O_K)$ in a subgroup of finite index, then  $\G$ has finite index in $G(O_K)$. 
\end{lemma}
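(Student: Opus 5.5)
The plan is to reduce the statement to the theorem of \cite{Ragh1} and \cite{Ven 2} that, for a pair of \emph{opposite} parabolic $K$-subgroups, the integral points of the two unipotent radicals generate a subgroup of finite index in $G(O_K)$. So the work is to produce, from the hypotheses, a second unipotent radical opposite to $U$ whose integral points lie in $\G$ up to finite index.

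\textbf{Step 1: find an element of $\G$ in the big cell.} Choose a $K$-parabolic $P^-$ opposite to $P$ (containing the Levi $L$), and let $U^-$ be its unipotent radical. Let $w$ be the element of the $K$-Weyl group with $wPw^{-1}=P^-$. The Bruhat double coset $PwP$ is Zariski open in $G$. Since $\G$ is Zariski dense, there is some $\g\in\G$ with $\g\in PwP$.

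For this $\g$, the parabolic $\g P\g^{-1}$ is opposite to $P$. Write $\g=p_1wp_2$ with $p_i\in P(K)$. Then $\g P\g^{-1}=p_1P^-p_1^{-1}$, and since $p_1\in P$, this is again opposite to $P$. Its unipotent radical is $U':=\g U\g^{-1}$.

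\textbf{Step 2: integrality.} Because $\g\in\G\subset G(O_K)$, conjugation by $\g$ preserves integral points up to commensurability. Hence $\g U(O_K)\g^{-1}$ is commensurable with $U'(O_K)$. By hypothesis $\G\cap U(O_K)$ has finite index in $U(O_K)$. Conjugating by $\g\in\G$ shows that $\G\cap U'(O_K)$ has finite index in $U'(O_K)$.

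Therefore $\G$ contains a subgroup $\D$ generated by finite-index subgroups of $U(O_K)$ and of $U'(O_K)$, where $U$ and $U'$ are the unipotent radicals of opposite $K$-parabolics.

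\textbf{Step 3: the generation theorem.} Here we invoke the cited results.
\begin{itemize}
\item When $K\text{-rank}(G)\geq 2$, Raghunathan \cite{Ragh1} shows that the group generated by finite-index subgroups of the integral points of the unipotent radicals of two opposite parabolics has finite index in $G(O_K)$.
\item When $K\text{-rank}(G)=1$ and $\infty\text{-rank}(G)\geq 2$, this is the theorem of \cite{Ven 2}. That paper shows such a group is arithmetic, i.e.\ of finite index, using the higher real rank in an essential way.
\end{itemize}
Hence $\D$ has finite index in $G(O_K)$, and so does $\G\supset\D$.

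\textbf{Main obstacle.} All the difficulty is in Step 3, especially the $K$-rank one case. There the subgroup $\D$ is a priori only a discrete group generated by two opposite integral unipotent groups. Showing it has finite index is a genuinely deep statement that fails without the hypothesis $\infty\text{-rank}\geq 2$ (e.g.\ for $SL_2(\Z)$). If I had to prove it rather than cite it, I would follow \cite{Ven 2}. The key steps there are:
\begin{itemize}
\item show that $\D$ contains a finite-index subgroup of $L(O_K)\cap\D$ large enough to act on $U(O_K)$ with Zariski dense orbits;
\item show that $\D$ meets each integral unipotent radical of each $\G$-conjugate parabolic in a finite-index subgroup, using the facts on unipotent groups from the previous subsection (bounded generation and the Zariski-dense-implies-finite-index result of \cite{Ragh 2});
\item deduce that $\D$ contains a congruence-type subgroup, i.e.\ a finite-index subgroup of $G(O_K)$, by a Margulis–Raghunathan style argument.
\end{itemize}
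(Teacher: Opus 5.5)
\textbf{Step 1 fails for a general parabolic.} The Weyl element $w$ with $wPw^{-1}=P^-$ exists only when $P$ is $G(K)$-conjugate to its opposite. Let $P$ be standard of type $\theta\subseteq\Delta_K$ (the simple $K$-roots). Then $P^-$ is conjugate to the standard parabolic of type $\theta^*=-w_0\theta$. So $P$ and $P^-$ are conjugate iff $\theta^*=\theta$. The parabolics opposite to $P$ form a single $U(K)$-orbit. Hence when $\theta^*\neq\theta$, \emph{no} conjugate $\g P\g^{-1}$ is opposite to $P$, and no choice of $\g\in\G$ can produce your $U'$.

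The simplest instance is $G=SL_3$ with $P$ the stabiliser of a line $\ell$. The opposites of $P$ are stabilisers of complementary planes. By contrast, $P\cap \g P\g^{-1}$ is either $P$ or the stabiliser of two distinct lines, and neither is a Levi subgroup.

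So your argument proves the lemma only for $P$ of self-opposite type. That does cover the whole $K$-rank one case: there every proper $K$-parabolic is minimal, hence self-opposite, and your reduction to the opposite-pair theorem of \cite{Ven 2} is sound. It also covers $K$-rank $\geq 2$ when $-1$ lies in the relative Weyl group. But the lemma is stated for arbitrary $P$, and Steps 2 and 3 are fine only once an opposite pair is in hand.

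\textbf{Repairing the gap.} Part of it closes easily. If $P'\supseteq P$ then $U_{P'}\subseteq U$, so the hypothesis passes to $P'$. You may therefore replace $P$ by the self-opposite parabolic of type $\theta\cup\theta^*$, provided $\theta\cup\theta^*\neq\Delta_K$. When $\theta\cup\theta^*=\Delta_K$ (e.g.\ line stabilisers in $SL_n$, $n\geq 3$), you need a genuinely new step that assembles an opposite pair from several $\G$-conjugates of $U$.

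For $SL_3$ this works as follows. The orbit $\G\ell$ is Zariski dense in $\mathbb{P}^2$, so you can choose $\g_1,\g_2,\g_3\in\G$ with $\g_i\ell=Kv_i$ and $v_1,v_2,v_3$ a basis. In this basis, the radical of the stabiliser of $Kv_i$ contains the root groups $X_{ij}$ for $j\neq i$. So $\G$ contains finite-index subgroups of the integral points of all six root groups. By the result of \cite{Ragh 2} recalled in the subsection on unipotent groups, $\G$ then contains finite-index subgroups of the integral points of the upper and lower unitriangular groups in this basis, which form an opposite pair. A general version of this step is what your proposal is missing.

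The paper itself gives no argument here. It simply quotes the lemma, with the Zariski-density hypothesis built in, from \cite{Ragh1} for $K$-rank $\geq 2$ and from \cite{Ven 2} for $K$-rank one.
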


In the above result, the ``higher rank" assumption ($\infty-rank(G)\geq 2$) is necessary.    

\newpage 

\section{A Result on Integral Unipotent Radicals  of $G(O_K)$}

\subsection{Commensurated Subgroups and a Topology on $G(O_K)$} 

Suppose then that $N\subset \G= G(O_K)$ is an infinite subgroup which is commensurated by $G(O_K)$. A topology may be defined on $\G$ by designating a fundamental system of neighbourhood of identity in $\G$ to be the system of subgroups $N'$ of $N$ of finite index and, for each $x\in \G$, the sets $xN'$ to be a fundamental system of neighbourhoods of $x$. That this topology makes $\G$ a topological group follows because $N$ is commensurated. It also follows that the completion, $\widehat{\G}$, of $\G$ with respect to this topology is a locally compact totally disconnected group and that the closure of $N$ in $\widehat{\G}$ is open and is the profinite completion of $N$. Furthermore, for any subgroup $N'$ of finite index in $N$, the closure $\widehat{N'}$ is open and compact in $\widehat{\G}$ and satisfies that $\G \cap \widehat{N'}=N'$. Details of this and other related completions may be seen in \cite[\S5]{ReidWesolek}, where it is called the \emph{Belyaev completion}.\\

We consider the inner conjugation action of $\G=G(O_K)$ on the completion $\widehat{\G}$.  \\

\subsection{The Unipotent Group $U(O_K)$.}  Let $u\in U(O_K)$ be a unipotent  element of infinite order.
Since $\infty -rank (G)\geq 2$, it follows from Theorem A  of \cite{Lu-Mo-Ra} (see the paragraph before the statement of Theorem B in \cite{Lu-Mo-Ra}) that for a fixed finite set of generators of $G(O_K)$, the word length of the element $u^n$ is $O(\log n)$ (as $n$ tends to infinity). \\

\begin{lemma} \label{unipotent} Let $\infty-rank (G)\geq 2$ and $u\in U(O_K)$ a unipotent element. Then the element $u$ normalises 
a compact open subgroup $V$ in $\widehat{\G}$. \\

Equivalently, the element $u$ normalises a subgroup of $G(O_K)$ which is commensurate to $N$.  
\end{lemma}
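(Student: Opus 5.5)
We use the logarithmic word-length estimate from \cite{Lu-Mo-Ra} quoted above, together with the standard fact that in a totally disconnected locally compact group an element normalises a compact open subgroup exactly when the double cosets $V u^n V$ have bounded size. Concretely, fix a compact open subgroup $W=\widehat{N}$ of $\widehat{\G}$ and a finite symmetric generating set $\Sigma$ of $\G$. For $x\in\widehat{\G}$ put $m(x)=[W: W\cap xWx^{-1}]$. This is the index of the stabiliser of $xW$ in $W$, so it equals the number of cosets in the $W$-orbit of $xW$, and it is finite. It is submultiplicative in the sense that $m(xy)\leq m(x)m(y)$, since $W\cap xyW(xy)^{-1}\supseteq W\cap xWx^{-1}\cap x(W\cap yWy^{-1})x^{-1}$, and the indices of the two pieces are bounded by $m(x)$ and $m(y)$ respectively. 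Let $C=\max_{s\in\Sigma} m(s)$.

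\textbf{Step 1.} Since the word length of $u^n$ is $O(\log n)$, submultiplicativity gives
\[m(u^n)\leq C^{c\log n}=n^{c'}.\]
So the displacement of $W$ by powers of $u$ grows at most polynomially.

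\textbf{Step 2.} We want to convert polynomial growth into boundedness, and here we use Willis' structure theory of the scale. Let $s(u)=\min_V [uVu^{-1}:uVu^{-1}\cap V]$, the scale of $u$, where $V$ ranges over compact open subgroups. Willis showed that $s(u)=\lim_n [u^nWu^{-n}:u^nWu^{-n}\cap W]^{1/n}$, and that this limit is independent of $W$. Step 1 bounds this index by $m(u^{-n})\leq n^{c'}$, because $u^{-n}$ also has logarithmic word length. Hence $s(u)=1$, and similarly $s(u^{-1})=1$.

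\textbf{Step 3.} By Willis' theory, $s(u)=s(u^{-1})=1$ means that $u$ normalises a compact open subgroup $V$ that is tidy for $u$. Indeed, for a subgroup $V$ tidy for $u$ we have $V=V_+V_-$, and the scales $s(u)=[uV_+u^{-1}:V_+]$ and $s(u^{-1})=[u^{-1}V_-u:V_-]$ are both equal to $1$. This forces $uV_+u^{-1}=V_+$ and $uV_-u^{-1}=V_-$, so $uVu^{-1}=V$. This completes the proof of the first form of the statement.

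\textbf{Step 4 (equivalence).} Given such a $V$, the subgroup $V\cap\G$ is commensurate to $N=\widehat{N}\cap\G$, because $V$ and $\widehat{N}$ are commensurable compact open subgroups and $\G$ is dense in $\widehat{\G}$. It is also normalised by $u\in\G$. Conversely, if $u$ normalises a subgroup $M$ of $\G$ commensurate with $N$, then the closure $\overline{M}$ is a compact open subgroup normalised by $u$.

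\textbf{Main obstacle.} The delicate point is Step 2: applying the scale limit formula correctly, including the direction of conjugation and the justification that the limit is independent of the chosen subgroup, and then passing from scale $1$ in both directions to an honest normalised subgroup in Step 3. If citing the tidy theory is to be avoided, the alternative is to show directly that $\bigcap_{n\in\Z} u^nWu^{-n}$ is open. One would first try to bound $m(u^n)$ uniformly using the polynomial estimate, via an argument that the function $n\mapsto m(u^n)$ is eventually constant once its growth is subexponential. This could fail, and in that case we fall back on Willis' theorem.
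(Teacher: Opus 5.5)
Your proof is correct and takes the same route as the paper, which simply combines the $O(\log n)$ word-length bound of Lubotzky--Mozes--Raghunathan with Proposition 6.10 of Shalom--Willis; your Steps 1--3 spell out what that proposition supplies (polynomial displacement forces $s(u)=s(u^{-1})=1$, so $u$ normalises a subgroup tidy for it). One small remark: the limit formula in Step 2 is due to M\"oller rather than Willis, and you can avoid it altogether via $s(u)^n=s(u^n)\le m(u^{-n})\le n^{c'}$.
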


This follows from Proposition (6.10) of \cite{Sha-Wil} and the result  (that the word length of $u^n$ is $O(\log n)$)  of \cite{Lu-Mo-Ra} mentioned earlier.\\

We will also need the following result. Fix a finite set $u_1, \cdots, u _k \in U(O_K)$ such that the product 
$\D^+= u_1^\Z \cdots u_k^{\Z}$ contains a subgroup of  finite index in the group $U(O_K)$. The existence of such a finite set  follows from  the fact (mentioned before) that the integral unipotent group $U(O_K)$ has  bounded generation. 

\begin{lemma} \label{unipotent product} There exists a subgroup $\D_1 \subset \D^+$ of finite index in $U(O_K)$ which normalises a compact open subgroup $V$ in $\widehat{\G}$. \\

Equivalently, $\D_1$ normalises  a subgroup $N'$ of $G(O_K)$ which is commensurate to $N$. 

\end{lemma}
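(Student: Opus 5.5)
The plan is to combine Lemma \ref{unipotent} for each generator $u_i$ with bounded generation, and then use a Baire category (or counting) argument in the locally compact group $\widehat{\G}$ to find one compact open subgroup normalised by a finite index subgroup of $U(O_K)$.

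First, apply Lemma \ref{unipotent} to each $u_i$, $i=1,\dots,k$. This gives compact open subgroups $V_i\subset\widehat{\G}$ with $u_iV_iu_i^{-1}=V_i$. Fix one compact open subgroup $V_0$, for instance the closure $\widehat N$. Since all compact open subgroups are commensurable, each $V_i$ is commensurable with $V_0$. Consequently the set $\{u_i^m V_0 u_i^{-m}: m\in\Z\}$ consists of subgroups that are uniformly commensurable with $V_0$: each contains $V_0\cap V_i$, hence has index in $V_0V_i$-type bounds controlled by $[V_i:V_0\cap V_i]$ and $[V_0:V_0\cap V_i]$. Passing to $W=\bigcap_i V_i\cap V_0$, a compact open subgroup, we get for every element $g=u_1^{m_1}\cdots u_k^{m_k}\in\D^+$ a uniform bound
\[
[W: W\cap gWg^{-1}]\le C,
\]
with $C$ independent of $g$. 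This follows by conjugating step by step, since $u_i^{m}Wu_i^{-m}\supset u_i^m(W)$ and $u_i^m W u_i^{-m}\subset V_i$, with $[V_i:W]$ finite.

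Next, let $\D'$ be a finite index subgroup of $U(O_K)$ contained in $\D^+$ (as a set, this is how the hypothesis on $\D^+$ should be used). The uniform bound says that the $\D'$-orbit of $W$ under conjugation consists of subgroups $gWg^{-1}$, each of which contains a subgroup of index at most $C$ in $W$, and symmetrically. The key step is then the standard lemma (as in \cite{Sha-Wil}, the ``uniformly commensurated implies normalises a commensurate subgroup'' argument, or Bergman--Lenstra) that a group acting by conjugation with a uniform bound on $[W:W\cap gWg^{-1}]$ normalises a compact open subgroup commensurate with $W$. One concrete route is to take $V=\bigcap_{g\in\D'}gWg^{-1}$. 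By Bergman--Lenstra this intersection contains a finite intersection of such conjugates, hence is open, and it is clearly $\D'$-invariant. If only finitely many conjugates occur up to finite index, the intersection is of finite index in $W$. Alternatively one can take the subgroup generated by the conjugates, which is compact because the conjugates are uniformly commensurate, using the Bergman--Lenstra bound again. Setting $\D_1=\D'$ (or a finite index subgroup of it inside $\D^+$) gives the lemma. The equivalent formulation follows by intersecting $V$ with $\G$, using $\G\cap\widehat{N'}=N'$ and the fact that compact open subgroups of $\widehat{\G}$ meet $\G$ in subgroups commensurate to $N$.

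The main obstacle is the uniform bound in the first step. Conjugating successively by $u_k^{m_k},\dots,u_1^{m_1}$ moves $W$ into a conjugate lying in $V_i$ only at the last stage. At intermediate stages the conjugate $u_k^{m_k}Wu_k^{-m_k}$ is no longer inside $V_{k-1}$, so the indices could multiply in an uncontrolled way unless they are tracked carefully. I would handle this by bounding $[gWg^{-1}:gWg^{-1}\cap W]$ through the chain $V_1,\dots,V_k$. Each conjugation by $u_i^{m_i}$ changes the commensurability index with $V_i$ by at most the factor $[V_i:W]\,[V_{i}:V_i\cap V_{i\pm1}]$, giving a bound of the form $C=\prod_i [V_i:W]^2$. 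I expect this bookkeeping, together with the application of Bergman--Lenstra, to carry the proof.
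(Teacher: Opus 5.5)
Your architecture (apply Lemma~\ref{unipotent} to each $u_i$, then use bounded generation to control all of $\D^+$ at once) is the natural adaptation of Shalom--Willis that the paper indicates. The paper states this lemma without proof, and your uniform bound is correct. It is the easy part, though, not the main obstacle. The quantity $d(A,B)=\log[A:A\cap B]+\log[B:A\cap B]$ is conjugation-invariant on compact open subgroups and satisfies the triangle inequality. Stripping off one factor $u_i^{m_i}$ at a time gives $d(W,gWg^{-1})\le 2\sum_i d(W,V_i)$ for all $g\in\D^+$, which is your $C=\prod_i[V_i:W]^2$. Some of your intermediate claims are wrong, for instance that $u_i^mV_0u_i^{-m}$ contains $V_0\cap V_i$, but they are not needed.

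The genuine gap is the step you call standard: passing from $\sup_{g\in\D'}[W:W\cap gWg^{-1}]<\infty$ to a compact open subgroup normalised by $\D'$. Neither construction you propose works.
\begin{itemize}
\item \emph{Intersection.} Take $P\rtimes\langle\sigma\rangle$ with $P=\prod_{\Z}\Z/2$, $\sigma$ the shift and $W=\{x\in P: x_0=0\}$. Every conjugate $\{x_k=0\}$ has index $2$ in $P$, but $\bigcap_k\sigma^kW\sigma^{-k}$ is trivial, so it is not open.
\item \emph{Generated subgroup.} Take $E\rtimes\langle\sigma\rangle$, where $E$ is the restricted product over $\Z$ of copies of $\Z/4$ with respect to $2\Z/4$, and let $W'$ be $\Z/4$ in coordinate $0$ and $2\Z/4$ elsewhere. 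All conjugates contain $\prod 2\Z/4$ with index $2$, yet together they generate $E$, which is not compact.
\end{itemize}
The product of the two examples defeats both constructions at once, even though $\langle\sigma\rangle$ does normalise a compact open subgroup there. Bergman--Lenstra also does not apply as cited. It concerns a group uniformly commensurating one of its own subgroups, whereas $W\not\subset\D'$, and $\langle\D',W\rangle$ need not uniformly commensurate $W$. So the decisive implication is asserted, not proved.

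A clean repair uses that $\D'$ is finitely generated nilpotent. By the flatness theorem for such groups (Shalom--Willis), there is a compact open $V$ tidy for every $g\in\D'$. For such $V$ one has $[gVg^{-1}:gVg^{-1}\cap V]=s(g)$ and $[V:V\cap gVg^{-1}]=s(g^{-1})$. Both scales equal $1$, because $g$ normalises some compact open subgroup: this follows from Lemma~\ref{unipotent}, or from your bound combined with $s(g^k)=s(g)^k$. Hence $gVg^{-1}=V$ for all $g\in\D'$, and the bounded-generation step becomes unnecessary. If you want to keep your route instead, you must supply a genuine relative version of Bergman--Lenstra. That is more delicate than either intersecting or generating.
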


\newpage

\section{Proof of Theorem \ref{reformulate} and of Theorem \ref{S arithmetic}}

\subsection{Proof of Theorem \ref{reformulate}.}

Let $N \subset \G=G(O_K)$ be an infinite subgroup commensurated by $\G$. The Zariski closure of $N$ is also infinite and hence the identity component $H$ of the Zariski closure has positive dimension. Moreover, if $N'$ has finite index in $N$ (or is commensurate to $N$), then $H$ is still the identity component of the Zariski closure of $N'$. Consequently, $H$ is normalised by $\G$ and by the Borel density theorem, $G$ normalises $H$. The absolute simplicity of $G$ then implies that $H=G$ and hence $N$ is Zariski dense in $G$. Consequently, any subgroup $N'\subset \G$ commensurate to $N$  is also Zariski dense in $G$. \\

By Lemma \ref{unipotent product} there exists a subgroup $\D_1\subset U(O_K)$ of finite index which normalises a subgroup $N'\subset \G$ such that $N'$ is commensurate to $N$. Consider the group $\D$ generated by $\D_1$ and $N'$. By the preceding paragraph, $\D$ is Zariski dense in $G$. Moreover, the intersection  $\D \cap U(O_K)$ contains $\D_1$ and hence has finite index in $U(O_K)$. By Lemma \ref{unipotent generators}, $\D$ has finite index in $\G$, and {\it normalises} the infinite subgroup $N'$. By the Margulis normal subgroup theorem, $N'$ has finite index in the higher rank arithmetic group  $\D$ and hence in $\G$. \\

The group $N$ being commensurate to $N'$ also has finite index in $\G$. This completes the proof. 

\subsection{Proof of Theorem \ref{S arithmetic}}

Let $N\subset G(O_S)$ be a commensurated infinite subgroup. As before, define the corresponding Belyaev topology on the group $G(O_S)$. We then get a completion $\widehat{G(O_S)}$ such that the closure of $N$ is open and compact. By Lemma \ref{unipotent product}, there exists a subgroup $\D_1 $  of finite index in the {\it integral unipotent group}  $U(O_K)$ ({\bf not} the $S$-integral unipotent group $U(O_S)$) which fixes an open compact subgroup $\Omega$ of $\widehat{G(O_S)}$. Equivalently, the intersection $N'=\Omega \cap G(O_S)$ is commensurate
to $N$ and is normalised by $\D_1$. \\

Fix an infinite order element $n$ in $N'$; for $u\in \D_1$, the conjugate $nun^{-1}$ lies in $G(O_S)$ and is unipotent; hence some integral power of it is in $G(O_K)$. Hence we may assume, $nun^{-1}\in G(O_K)$. Therefore, the commutator $c=nun^{-1}u^{-1}$ lies in $G(O_K)$ and also lies in $N'$ since $u$ normalises $N'$.  We may assume that $c$ has infinite order. \\
 
 Now both $G(O_K)$ and $N'$ are commensurated by $G(O_S)$. Hence their intersection $N_1= N'\cap G(O_K)$ is also commensurated by $G(O_S)$, and in particular by the higher rank group $G(O_K)$, and is infinite by the preceding paragraph. By Theorem \ref{reformulate}, $N_1$ has finite index in $G(O_K)$. Thus $N$ intersects $G(O_K)$ in a subgroup of finite index. This proves Theorem \ref{S arithmetic}. 

\begin{remark} Theorem \ref{S arithmetic} can be restated as  follows. Suppose $S\supset \infty$ is a finite set of places of $K$ and $N\subset G(O_S)$ a commensurated subgroup. Assume that $\infty -rank (G)\geq 2$ and $K-rank (G)\geq 1$. Then $N$ has the standard description of commensurated subgroups. \\

This follows from Lemma (2.8) of \cite{Lub-Zim}: by Theorem \ref{S arithmetic}, we know that the intersection $N\cap G(O_K)$ has finite index in $G(O_K)$. By Lemma (2.8) of \cite{Lub-Zim}, it follows that $N$ is commensurate to a subgroup of the form $G(O_{S'})$  where $S'$ is a subset of the set $S$ of places of $K$, and $S'$  includes all the infinite places. This is the meaning of the phrase "standard description of commensurated subgroups". 

\end{remark}

\begin{remark} The above proof of Theorem \ref{S arithmetic} does not seem to work for $SL_2(\Z[1/p])$. However, what {\it can} be asserted is that an integral unipotent element normalises a commensurated subgroup of $SL_2(\Z[1/p])$. This follows because any such element $x$ is a $p^n$th power for all $n>0$ and so, by Proposition 4 in \cite{Structure94}, $x$ normalises a compact open subgroup in any homomorphic image of $SL_2(\Z[1/p])$.

\end{remark}

\newpage

\section{Homomorphisms of Non-uniform higher rank lattices into Locally Compact Groups} \label{homomorphisms}

\subsection{Generalities on Locally Compact Groups}

Let $H$ be a locally compact group and $H^0$ the connected component of identity in $H$. Then $H^0$ is invariant under all continuous automorphisms of $H$ (in this sense, $H^0$ is a characteristic subgroup of $H$). Let $p: H\ra H/H^0$ be the quotient map. Then $H/H^0$ is a totally disconnected group. \\

By the Gleason-Yamabe Theorem, $H$ contains an open subgroup $U$ with a compact normal subgroup $C$ such that the quotient $L=U/C$ is a Lie group. By replacing $U$ by the inverse image of the open subgroup $L^0$ of $L$ if necessary, we may assume that $L$ is connected. Let $q: U\ra U/C$ be the quotient map. The following Proposition is roughly the same as the  result in \cite{Burger}. 

\begin{proposition} With the preceding notation, we have the following. \\

(1) The group $U$ contains $H^0$.\\

(2) The quotient $U/H^0$ is compact. \\

(3) The group $U$ is commensurated by $H$. \\

(4) We have the equalities  $U=CH^0=H^0C$ and $L=U/C=H^0/H^0\cap C$. 

\end{proposition}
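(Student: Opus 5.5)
We prove the four items in order, using only that $U$ is open in $H$, $C\subset U$ is a compact normal subgroup of $U$, and $L=U/C$ is a connected Lie group.

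For (1), note that an open subgroup of a topological group is also closed, since its complement is a union of cosets, each of which is open. Hence $H^0\cap U$ is open and closed in the connected group $H^0$, and it contains the identity. So $H^0\cap U=H^0$, that is, $H^0\subset U$.

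For (4) we study the image $q(H^0)\subset L$. It is connected, and we want to show it is all of $L$.
\begin{itemize}
\item $H^0$ is closed and normal in $U$, and $C$ is compact, so $H^0C$ is a closed subgroup of $U$.
\item Since $C$ is a compact normal subgroup of $U$, the map $q$ is a closed map. Therefore $q(H^0)$ is a closed connected subgroup of $L$.
\item $U/H^0$ is totally disconnected: it is an open subgroup of $H/H^0$, which is totally disconnected.
\item The quotient $L/q(H^0)\cong U/H^0C$ is a continuous image of $U/H^0$.
\item On the other hand, $L/q(H^0)$ is a quotient of the connected group $L$, so it is connected.
\end{itemize}
This is the main obstacle: a continuous image of a totally disconnected group need not be totally disconnected, so connectedness of $L/q(H^0)$ does not immediately give triviality.

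To get around this, I would use local compactness. The group $U/H^0$ is locally compact and totally disconnected, so by van Dantzig it has a basis of compact open subgroups at the identity. Take such a compact open subgroup $W\subset U/H^0$. Its image in $U/H^0C$ is compact, and it is open because the quotient map is open. So $U/H^0C$ has an open subgroup. Since $U/H^0C\cong L/q(H^0)$ is connected, that open subgroup must be the whole group. The same holds for arbitrarily small such $W$, so $U/H^0C$ has a neighbourhood basis at the identity consisting of open subgroups. A connected group with this property is trivial. Hence $U=H^0C=CH^0$, and by the second isomorphism theorem $L=U/C=H^0C/C\cong H^0/(H^0\cap C)$.

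For (2), by (4) we have $U/H^0=CH^0/H^0$, which is the continuous image of the compact group $C$, hence compact.

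For (3), let $h\in H$. The subgroups $U$ and $hUh^{-1}$ are both open, and each contains $H^0$ because $H^0$ is characteristic. In the totally disconnected group $H/H^0$, the subgroup $U/H^0$ is compact by (2) and open by openness of $U$. Its conjugate $hUh^{-1}/H^0$ is likewise compact and open. Two compact open subgroups are commensurable: their intersection is open in each, and an open subgroup of a compact group has finite index. Pulling back to $H$, the index of $U\cap hUh^{-1}$ in both $U$ and $hUh^{-1}$ is finite, so $U$ is commensurated by $H$.
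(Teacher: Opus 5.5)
Your proof is correct. It reaches (4) by a somewhat different mechanism from the paper and reverses the order of (2) and (4).

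The paper proves (2) first. For a single compact open $W\subset U/H^0$, the image $q(p^{-1}(W))$ is an open subgroup of the connected group $L$, so $U=C\,p^{-1}(W)$ and hence $U/H^0=p(C)W$ is compact. It then obtains (4) by a limiting argument: it writes $u=c_nv_n$ with $v_n\in p^{-1}(W_n)$ for a sequence $W_n$ shrinking to $\{1\}$, extracts a convergent subsequence $c_n\to c$ in $C$, and concludes $p(u)=p(c)$.

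You go directly to (4). The same van Dantzig-plus-connectedness observation shows that every member of a neighbourhood base of open subgroups of $U/H^0C\cong L/q(H^0)$ is the whole group. Closedness of $H^0C$ makes this quotient Hausdorff, so it is trivial. Item (2) then follows at once, since $U/H^0=CH^0/H^0$ is an image of $C$.

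Both arguments ultimately rest on compactness of $C$ forcing $CH^0$ to be closed; the paper's limit argument is a hands-on proof that $\bigcap_W C\,p^{-1}(W)=CH^0$. Your version needs no countability, however. The paper's sequence $W_n$ and its extraction of a convergent subsequence from $C$ tacitly assume first countability (in general one must pass to nets). Your argument works verbatim for any locally compact $H$.

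One small point: make explicit that the final step ``a connected group with this property is trivial'' uses Hausdorffness of $U/H^0C$. That is exactly where your first bullet (closedness of $H^0C$) is needed.
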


\begin{proof} The group $U\cap H^0$ is an open subgroup of the connected group $H^0$ and therefore is all of $H^0$. Hence $H^0=U\cap H^0\subset U$. \\

Since the quotient map $p: H\ra H/H^0$ is an open map, and $U\supset H^0$, it follows that $p(U)=U/H^0$ is open in $H/H^0$. Since $H/H^0$ is totally disconnected, it follows that so is $U/H^0$. Therefore, by a theorem of Dantzig, the group $U/H^0$ contains a compact open subgroup $W$ of $H/H^0$. Let $V=p^{-1}(W)$. Then $V$ is an open subgroup of $U$ (and hence of $H$) which contains $H^0$. In particular, the image of $V$ under the open map $q: U \ra L$ is an open subgroup; the connectedness of $L$ then implies that $q(V)=q(U)=L$ and hence $U=CV$. Taking the image of this equality under the map  $p$, we see that $p(U)=p(C) p(V)=p(C)W$ and the latter is compact since both $C$ and $W$ are compact.  This proves (2). \\

Since $U/H^0$ is an open compact subgroup of $H/H^0$, it is commensurated by $H/H^0$; since $U\supset H^0$,  this is equivalent to saying $U$ is commensurated by $H$. This proves (3). \\

In the proof of (2), we proved that if $W$ is an open subgroup of $U/H^0$ (the latter is proved to be compact), then $U=Cp^{-1}(W)$. Let $\{W_n\}$ be a sequence of open subgroups of $U/H^0$ shrinking to $\{1\}$ and let $V_n=p^{-1}(W_n)$. Since $U=CV_n$, given $u\in U$, there exists a sequence $c_n\in C$ and a sequence $v_n\in V_n$ such that $u=c_nv_n$. Since $C$ is compact, we may assume that the sequence $\{c_n\}$ converges to $c\in C$, say. Therefore, $v_n$ converges to $v\in U$. Hence $u=cv$. Taking the $p$-images of the equation $u=c_nv_n$, we see that $p(u)=p(c_n)p(v_n)$ and $p(v_n)\in W_n$ tends to the identity. Hence $p(u)=p(c)$, and hence $U=CH^0$. This proves (4). 
\end{proof} 

\subsection{General results on Locally Compact Groups} Let $Q$ be a centre-less connected real semi-simple Lie group without compact factors. Then $Q$ is a direct product of non-compact connected simple Lie groups which have no normal subgroups. If $L$ is a connected Lie group, $R$ is its radical (the maximal connected normal solvable subgroup in $L$) and $Z$ is the centre of the semisimple group $L/R$, then $L/Z$ is a direct product of a centre-less semi-simple group $Q$ without compact factors and a {\it compact}  centre-less semi-simple group $K$. The inverse image of $K$ under the quotient map $L\ra L/Z$ is clearly a maximal (closed) amenable normal   subgroup $N'$ of $L$. \\

We now return to the situation of the preceding subsection: $H$ is a locally compact group, and $U$ is an open subgroup with a surjective map $U\ra L$ to a connected Lie group $L$, with compact kernel $C$. By the proposition in the preceding subsection, we have $U\supset H^0$ and a surjection $q: H^0 \ra L$ with kernel $C\cap H^0$. Hence $N=q^{-1}(N')$ is a maximal amenable normal closed subgroup of $H^0$. Since such a group $N$ is unique, it follows that $N$ is a characteristic subgroup of $H^0$, and is invariant under conjugation by elements of $H$. The following Theorem is also very similar to a result in \cite{Burger}.

\begin{theorem} \label{semisimplequotient} Let $H$ be a locally compact group and $H^0$ its identity component, and $N$ the maximal closed normal {\bf amenable} subgroup of the identity component $H^0$. Then $H$ contains an open subgroup $H'$ of finite index such that $H'/N$ is a direct product of the form 
\[H'/N=Q\times H'/H^0,\]
where $Q$ is a connected centre-less semi-simple Lie group without compact factors (and $H'/H^0$ is totally disconnected). 

\end{theorem}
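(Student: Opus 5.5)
We work with the open subgroup $U\supset H^0$ and the map $q:U\ra L$ with compact kernel $C$ from the preceding Proposition, and with $N=q^{-1}(N')\cap H^0$ as described just before the statement. By part (4) of the Proposition, $H^0/(H^0\cap C)\cong L$, so $H^0/N\cong L/N'\cong Q$, where $Q$ is the centre-less semisimple quotient without compact factors obtained by dividing $L$ by the preimage of the compact factor. Thus $N$ is closed, normal in $H$ (it is characteristic in $H^0$), and $H^0/N\cong Q$.

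The plan has three steps. First, pass to $\bar H=H/N$. This is a locally compact group with identity component $\bar H^0=H^0/N\cong Q$, and $\bar H/\bar H^0\cong H/H^0$ is totally disconnected. Second, use the conjugation action of $\bar H$ on the normal subgroup $Q$. This gives a continuous homomorphism $\phi:\bar H\ra \mathrm{Aut}(Q)$. Since $Q$ is centre-less semisimple, $\mathrm{Aut}(Q)$ is a Lie group containing $\mathrm{Ad}(Q)\cong Q$ as the identity component, with finite index. (Here $\mathrm{Aut}(\mathfrak q)$ is a real algebraic group, so it has finitely many components, and $\mathrm{Aut}(Q)\subset \mathrm{Aut}(\mathfrak q)$ is closed.) Let $H'_0=\phi^{-1}(\mathrm{Ad}(Q))$. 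This is an open subgroup of finite index in $\bar H$, and we take $H'$ to be its preimage in $H$; it contains $H^0$.

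Third, split off $Q$. Let $Z=\ker\phi\cap H'_0$ be the centraliser of $Q$ in $H'_0$. For $h\in H'_0$ there is a unique $x\in Q$ with $\phi(h)=\mathrm{Ad}(x)$, because $Q$ is centre-less. Then $hx^{-1}\in Z$, so $H'_0=QZ$. Moreover $Q\cap Z$ is the centre of $Q$, which is trivial. Since $Q$ and $Z$ commute, $H'_0=Q\times Z$ as abstract groups. The map $h\mapsto x=\mathrm{Ad}^{-1}(\phi(h))$ is continuous because $\mathrm{Ad}:Q\ra\mathrm{Ad}(Q)$ is a continuous bijective homomorphism of connected Lie groups, hence a homeomorphism. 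Therefore the decomposition is topological, $H'_0\cong Q\times Z$. Finally, $Z\cong H'_0/Q=H'/H^0$, which is totally disconnected, giving $H'/N=Q\times H'/H^0$.

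I expect the main obstacle to be the topological points in the second step. One must check that $\phi$ is continuous into $\mathrm{Aut}(Q)$ with its Lie topology. I would argue this via the induced action on the Lie algebra $\mathfrak q$: conjugation is jointly continuous, and automorphisms of a connected Lie group are determined by their differentials. One must also check that $\mathrm{Aut}(Q)/\mathrm{Ad}(Q)$ is finite, using that $\mathrm{Aut}(\mathfrak q)^0=\mathrm{Ad}(\mathfrak q)$ for semisimple $\mathfrak q$, all derivations being inner. Openness of $H'_0$ then follows from continuity of $\phi$ and openness of $\mathrm{Ad}(Q)$ in $\mathrm{Aut}(Q)$.
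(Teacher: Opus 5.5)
Your proposal is correct and follows the same route as the paper. In both, the finiteness of $\mathrm{Aut}(Q)/\mathrm{Int}(Q)$ gives a finite-index subgroup $H'$ acting on $Q=H^0/N$ by inner automorphisms, and because $Q$ is centre-less, $H'/N$ splits as $Q$ times the centraliser $Z$ of $Q$, with $Z\cong H'/H^0$.

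The only difference is that you also check the topological points the paper leaves implicit: continuity of $\phi$ into the Lie group $\mathrm{Aut}(Q)$, which gives openness of $H'$ as the statement requires, and the fact that the decomposition $Q\times Z$ is a topological direct product rather than merely an abstract one.
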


\begin{remark} It can happen that $Q$ is trivial. This is equivalent to saying that $H^0=N$ is  amenable. 
\end{remark}

\begin{proof} We first note that the group $Aut (Q)/Int(Q)$ of outer automorphisms of the semi-simple group $Q$ is finite. Since $Q=H^0/N$ is a characteristic quotient of $H^0$, it follows that the conjugation action of $H$ on $H^0$ descends to an action of $H$ on $Q=H^0/N$. Hence there exists a subgroup $H'$ of finite index in $H$ which acts on $Q$ by inner automorphisms. We will henceforth assume, as we may, that $H'=H$. \\

Given $h\in H/N$, then there exists $q\in H^0/N=Q$ such that $hxh^{-1}=qxq^{-1}$ for all $x\in Q$. Therefore, $hq^{-1} \in Z(Q)$ where $Z=Z(Q)$ is the centraliser of the normal subgroup $Q$ in $H/N$. Since $Q$ is centre-less, it follows that $H/N=ZQ$ is a direct product:

\[H/N=Z\times H^0/N.\] 
Therefore,  $Z=(H/N)/Q=(H/N)/(H^0/N)=H/H^0$. This proves the theorem. 
\end{proof}

Theorem \ref{semisimplequotient} has an implication for higher rank lattices in groups over non-archimedean local fields (of arbitrary characteristic). 

\begin{corollary} Let $\G$ be an irreducible lattice in a product $G=\prod _{i\in I} G_i(k_i)$, where $I$ is a finite set and for each $i\in I$, $k_i$ is a non-archimedean local field, with $rank (G):= \sum_{i\in I} k_i-rank (G_i)\geq 2$. Let $\rho : \G \ra H$ be a homomorphism of $\G$ into a locally compact group $H$ with non-discrete and dense image. Then the connected component $H^0$ of identity in $H$ is an amenable group. \\

For example, if and $n\geq 3$, ${\mathbb F}_p$ is the finite field with $p$-elements, and $\rho : SL_n({\mathbb F}_p[t]) \ra H$ is a dense non-discrete homomorphism into a locally compact group $H$, then $H^0$ is amenable.

\end{corollary}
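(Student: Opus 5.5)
We use Theorem \ref{semisimplequotient} to reduce to a statement about homomorphisms into real semisimple Lie groups. Passing to a finite index subgroup, we replace $H$ by the open subgroup $H'$ of finite index. Then $\G'=\rho^{-1}(H')$ has finite index in $\G$, so it is again an irreducible lattice of rank $\geq 2$, and $\rho(\G')$ is dense in $H'$. We may therefore assume $H=H'$, so that
\[H/N = Q\times H/H^0,\]
with $Q$ a connected centre-less semisimple Lie group without compact factors. We must show that $Q$ is trivial, since by the Remark following Theorem \ref{semisimplequotient} this is equivalent to $H^0=N$ being amenable. Project $\rho$ to the first factor to get $\sigma:\G\ra Q$. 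Since $H/N\ra Q$ is a continuous surjection and $\rho(\G)$ is dense in $H$, the image $\sigma(\G)$ is dense in $Q$.

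Suppose $Q\neq 1$. Composing with the adjoint representation of $Q$ gives a homomorphism $\G\ra Q\subset GL(\mathfrak q)$ with dense, hence Zariski dense, image in the real algebraic group $\mathrm{Ad}(Q)$, which is semisimple with no compact factors. Now apply Margulis superrigidity for the $S$-arithmetic, or more generally irreducible higher rank, lattice $\G$ in $\prod G_i(k_i)$, where all $k_i$ are non-archimedean. The target field is $\R$, and the only continuous homomorphisms from the totally disconnected group $\prod G_i(k_i)$ to a real Lie group have open kernel. So superrigidity forces the image of $\G$ to be relatively compact, or rather bounded, in $Q$. Concretely, the representation must have bounded image, because there is no nontrivial continuous extension to $G$ (the extension would factor through the compact-open part and hence be trivial on an open subgroup, giving finite image modulo compact). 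Thus $\overline{\sigma(\G)}$ is compact. But $\sigma(\G)$ is dense in $Q$, and $Q$ has no compact factors and is nontrivial, hence noncompact. This is a contradiction, so $Q=1$ and $H^0$ is amenable.

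The main obstacle is invoking superrigidity correctly in positive characteristic and for non-arithmetic settings; for irreducible lattices in products over local fields of rank $\geq 2$, Margulis's theorem covers all characteristics. Unboundedness is also needed: if the image were unbounded and Zariski dense, superrigidity would demand a continuous extension $G\ra \mathrm{Ad}(Q)$ with compatible fields, which is impossible since $k_i$ is non-archimedean and the target is over $\R$. I would also double-check that density in $Q$ yields Zariski density in $\mathrm{Ad}(Q)$, which is immediate since $\mathrm{Ad}(Q)$ is Zariski connected. The non-discreteness hypothesis is not even needed for this argument, beyond ensuring the setting is nontrivial. The example follows from the case $G=SL_n(\mathbb F_p((t^{-1})))$ with lattice $SL_n(\mathbb F_p[t])$ of rank $n-1\geq 2$.
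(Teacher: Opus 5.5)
Your proposal is correct and follows the paper's proof: apply Theorem \ref{semisimplequotient}, project the dense image of $\Gamma$ to $Q$, and use Margulis superrigidity (an unbounded Zariski-dense homomorphism of $\Gamma$ into a real group is impossible when all $k_i$ are non-archimedean) to force $Q=1$. Your reduction to $H'$ and the final compactness contradiction supply details the paper leaves implicit, but one step needs tightening: in your parenthetical, being trivial on an open subgroup does not by itself bound the image. The real point is that the kernel of a continuous extension is an open \emph{normal} subgroup, so it contains $G^+$ (by Tits' simplicity, or by contracting unipotent radicals with torus elements), and $G/G^+$ is compact.
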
 

\begin{proof} Let $N$ be the maximal closed amenable subgroup of $H^0$. By Theorem \ref{semisimplequotient} $H/ N$ is a product $H/N=Q\times H/H^0$ where $H$ is a real semisimple LIe group without compact factors. The projection of $\rho (\G)$ to $Q$ gives a dense homomorphism from $\G$ into $Q$; by the Margulis superrigidity, $Q$ is then trivial and therefore, $H^0=N$ is amenable and $H/N$ is totally disconnected. 
\end{proof}

\begin{remark} Many of the results in the last two subsections are due to M.Burger ( see  \cite{Burger} ).

\end{remark}

\subsection{Lattices in Simple Lie Groups} 
In this section, we make a strong assumption on $\G$: that it is a lattice in a higher rank {\bf simple} Lie group.\\

Suppose $\G$ is an irreducible {\bf non-uniform} lattice in a higher real rank {\it simple} Lie group $G$. By general theory, there exists a number field $K$ and an absolutely almost simple linear algebraic group $\mathbb G$ defined over $K$ such that $K-rank ({\mathbb  G})\geq 1$  (and $\infty-rank ({\mathbb  G})\geq 2$)  
and a surjection $f$  from the group ${\mathbb  G}(K\otimes _\Q \R)$  onto $G$ with finite kernel, such that $f({\mathbb G}(O_K))$ is commensurable to $\G$. \\

This means that $K$ has only one archimedean completion $K_v$  i.e. $K=\Q$ and $K_v=\R$ or else  $K$ is an imaginary quadratic extension of $\Q$ and $K_v=\C$. \\

\begin{lemma} Suppose $\rho: \G \ra C$ is a homomorphism from $\G$ into a connected simple  Lie group $C$ with dense image. Then $C$ is trivial.
\end{lemma}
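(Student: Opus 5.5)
Proof proposal. We use Margulis superrigidity for the higher rank lattice $\G$, splitting into cases by whether the Zariski closure of $\rho(\G)$ is compact.

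Replace $C$ by its adjoint group $C/Z(C)$. The image of $\rho$ is still dense, and if the adjoint group is trivial then $C$, being connected with discrete centre, is trivial (or abelian, which is excluded for a simple Lie group). So we may view $C$ as the identity component of the real points of a real algebraic group $\mathcal C=\mathrm{Ad}(C)$. Assume $C$ is nontrivial. Since $\rho(\G)$ is dense in the simple group $C$, it is Zariski dense in $\mathcal C$. The target is then a simple real algebraic group, and superrigidity applies to the homomorphism $\rho:\G\ra \mathcal C(\R)$.

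\emph{Case 1: $C$ is not compact.} By Margulis superrigidity, after passing to a finite index subgroup of $\G$ (which does not affect density, up to finite index, in the connected group $C$), $\rho$ extends to a continuous homomorphism $\widetilde\rho : G \ra C$. The image of $\G$ under $\widetilde\rho$ is then a lattice image. Since $G$ is simple and $\widetilde\rho$ is nontrivial, $\widetilde\rho$ has finite kernel, so $\widetilde\rho(G)$ is a closed subgroup locally isomorphic to $G$. Zariski density and connectedness force $\widetilde\rho(G)=C$. Since $\G$ is a lattice in $G$, its image $\widetilde\rho(\G)$ is discrete in $C$. A subgroup that is both dense and discrete in the connected group $C$ forces $C$ to be discrete and hence trivial. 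This contradicts $C\neq 1$.

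\emph{Case 2: $C$ is compact.} Here we use arithmeticity. $\G$ is commensurable with $f(\mathbb G(O_K))$, and $K$ has a single archimedean place. Compactness of $C$ means $\rho$ has bounded image in a compact real form. Superrigidity in its arithmetic form (Margulis) says that any Zariski dense homomorphism with unbounded image must come from an archimedean place of $K$ at which $\mathbb G$ is noncompact. Any bounded Zariski dense representation into a simple real group must come, via a field embedding $\sigma:K\ra \R$ or $\C$, from the group ${}^\sigma\mathbb G$ being compact at that place. Since $K$ has only one archimedean place (up to conjugation), the only embeddings are the identity and its complex conjugate, and ${}^\sigma\mathbb G(K_v)$ is isogenous to $G$, which is noncompact. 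So the representation would also have to extend continuously to $G\ra C$. A continuous homomorphism from a noncompact simple $G$ to a compact group is trivial, contradicting Zariski density. Alternatively, $K$-rank $\geq1$ gives unipotent elements $u\in\G$. Their images in a compact group are semisimple, and if $\rho$ were a restriction of a rational representation they would be unipotent, hence trivial. Then the normal closure of the unipotents, which has finite index, maps trivially, and a finite image cannot be dense in a connected nontrivial $C$.

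The main obstacle is Case 2: we must justify that the bounded case reduces to an algebraic embedding $\sigma$ of $K$. This is the standard consequence of superrigidity that every representation of an arithmetic group is, up to bounded error, given by Galois twists; here we invoke it in the form of Margulis' arithmeticity/superrigidity for $\mathbb G(O_K)$. The hypothesis that $K$ is $\Q$ or imaginary quadratic is exactly what eliminates the compact twists that exist for general $K$ (e.g.\ $SO(n,1)(\Z[\sqrt2])$).
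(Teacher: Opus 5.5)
Your proposal is correct and is in substance the paper's argument. The decisive input is Margulis' superrigidity for arithmetic groups over arbitrary fields (Theorem (C), Ch.~VIII), which writes $\rho=\eta\circ\sigma$ on a finite index subgroup $\G'$, with $\sigma$ forced to come from the unique archimedean place $v$ of $K$, where $\mathbb G$ is non-compact. The paper does not split into your two cases: it notes that $\eta(\sigma(\G'))$ is discrete in $H(k)$, since $\G$ is discrete in $\mathbb G(K_v)$ and $\eta$ has finite kernel. Discreteness contradicts density whether or not $C$ is compact (in the compact case it makes the image finite). This also spares you the step in Case 2, asserted without justification, that the continuous extension takes values in $C$.
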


\begin{proof} The group $C$ is the group of $k$-points of an absolutely simple algebraic group $H$ defined over an archimedean field $k$ ($k$ is either $\R$ or $\C$). The density of the image of $\rho$ means that the image of $\rho$ is Zariski dense in $H$. \\

By Theorem (C), Chapter VIII of Margulis' book, there exists a homomorphism $K\ra k$ of fields (so that $k=K_v$ or $\R$) and a homomorphism $\eta$ of algebraic groups $R_{K_v/k} ({\mathbb G}) \ra H$ defined over $k$ , and a homomorphism $\nu: \G \ra Z(H)$ as before such that $\rho (\g)=\nu (\g)\eta (\g)$. Note that $Z(H)$ is finite. \\

By passing to a subgroup of finite index, we may assume that $\rho (\g)=\eta (\g)$ for all $\g \in \G$. But $\eta (\G)$ is commensurable to $G(O_K)$ which is a discrete subgroup of $R_{K_v/k}(\mathbb G)$ and hence $\rho (\G)=\eta (\G)$ is a discrete subgroup of $C=H(k)$ if $\eta$ is an isomorphism. This  contradicts the density of $\rho(\G)$ in $C$ in the real topology. Therefore, $H$ is trivial and hence $C$ is trivial. 
\end{proof}

\begin{theorem} Suppose $\G$ is an irreducible non-uniform lattice in a higher rank simple Lie group $G$ and $\rho :\G \ra H$ is a homomorphism of $\G$ into a locally compact group $H$ with {\bf dense} and {\bf indiscrete} image. Then $H$ is compact and profinite.
\end{theorem}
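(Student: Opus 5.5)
We split $H$ into its connected and totally disconnected parts, kill the semisimple part with the preceding Lemma, and then use Theorem \ref{maintheorem} on the totally disconnected part.

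\textbf{Step 1: reduce to a totally disconnected quotient.} Apply Theorem \ref{semisimplequotient} to $H$. After passing to an open finite-index subgroup $H'$, and to the finite-index subgroup $\rho^{-1}(H')$ of $\G$, which is again a non-uniform lattice in $G$, we get
\[H'/N = Q\times H'/H^0,\]
where $N$ is the maximal amenable closed normal subgroup of $H^0$ and $Q$ is a centre-less semisimple Lie group without compact factors. Project $\rho(\G)$ to each simple factor of $Q$; the image is dense there. The Lemma just proved then forces each factor to be trivial, so $Q=1$ and $H^0=N$ is amenable.

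\textbf{Step 2: the totally disconnected quotient is compact.} Consider $\bar\rho:\G\ra H/H^0$, which has dense image in a totally disconnected locally compact group. By van Dantzig, $H/H^0$ has a compact open subgroup $W$, and $M=\bar\rho^{-1}(W)$ is commensurated by $\G$.
\begin{itemize}
\item If $M$ is infinite, Theorem \ref{maintheorem} gives that $M$ has finite index in $\G$. By density, $H/H^0=\overline{\bar\rho(\G)}$ is a finite union of translates of $W$, hence compact and profinite.
\item If $M$ is finite for every choice of $W$, then $\bar\rho(\G)$ meets a compact open subgroup in a finite set. So $\bar\rho(\G)$ is discrete, and being dense, $H/H^0=\bar\rho(\G)$ is discrete.
\end{itemize}

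\textbf{Step 3: show $H^0$ is trivial.} This is the main obstacle. We know $H^0$ is amenable, and we need to eliminate it using the indiscreteness of $\rho(\G)$. I would use the Gleason--Yamabe subgroup $U\supset H^0$ with compact kernel $C$ and $U/C=L=H^0/(H^0\cap C)$ a connected amenable Lie group.
\begin{itemize}
\item In the first case of Step 2, $\G_1=\rho^{-1}(U)$ has finite index, since $U$ is open and $H/H^0$ is compact, so $U$ has finite index. Hence $\G_1$ is a higher-rank lattice with dense image in $L$.
\item Every quotient of $L$ by its commutator-type characteristic series has a solvable Lie group as a factor. A higher-rank lattice has finite abelianization (property (T)/Margulis), so it cannot have dense image in a nontrivial connected abelian, hence solvable, Lie group. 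The compact semisimple part of $L$ must also be excluded. For this I would invoke Margulis superrigidity for compact targets: the image of the higher-rank lattice in a compact connected Lie group is finite unless it comes from a Galois twist. For $K=\Q$ or imaginary quadratic, the only archimedean embeddings give non-compact groups, so the image is finite.
\item Hence $L=1$, so $H^0\subset C$ is compact and connected. Applying the same argument to its Lie quotients shows $H^0=1$.
\item In the second case of Step 2, $H^0$ is open and $\rho(\G)\cap H^0$ is a finite-index subgroup dense in $H^0$. The same argument gives $H^0=1$, so $H$ is discrete, contradicting indiscreteness.
\end{itemize}

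\textbf{Conclusion.} $H=H/H^0$ is compact and profinite. By the congruence subgroup property, which is known when $K$-rank$\geq1$ and the rank is higher, this profinite group is a quotient of the congruence completion.
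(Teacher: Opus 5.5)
Your proof is correct. Steps 1 and 2 follow the paper, but Step 3 reaches compactness by a different route.

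\textbf{How the two routes differ.} The paper, once $Q=1$ and $H/H^0$ is compact, argues as follows:
\begin{itemize}
\item $H$ is amenable, and it inherits property (T) from its dense subgroup $\rho(\G)$.
\item ``Amenable and (T) implies compact'' then gives compactness of $H$.
\item Profiniteness follows from Peter--Weyl: every finite-dimensional representation of $H$ has finite image, since the Lemma kills the semisimple part of the identity component of the image and (T) kills the torus.
\end{itemize}
You instead show directly that an open subgroup is compact:
\begin{itemize}
\item The finite-index subgroup $\rho^{-1}(U)$ has dense image in the Gleason--Yamabe quotient $L=U/C$.
\item The Lemma kills $L/R$, and finite abelianization kills the radical $R$ through $R/\overline{[R,R]}$.
\item So $U=C$ is compact open, and the paper's Peter--Weyl argument is then run on $U$.
\end{itemize}

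\textbf{What each buys.} Your route needs only finite abelianization, not the amenable-plus-(T) theorem or the transfer of (T) to $H$. It also makes Step 1 redundant, since the Lemma handles non-compact simple factors of $L/R$ as well. The paper's route is shorter given those facts. Your Step 2 dichotomy is a genuine improvement: the paper applies Theorem \ref{maintheorem} to $q^{-1}(V)$ without checking that it is infinite. The finite case is exactly where the indiscreteness hypothesis has to be used, and the paper never uses it.

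\textbf{Three slips to fix.}
\begin{itemize}
\item In the second case of Step 2, $H/H^0$ is discrete, so $W=\{1\}$ is admissible and $\rho^{-1}(H^0)$ is \emph{finite}, not of finite index. Then $\rho(\G)\cap H^0$ is a finite dense subset of the open connected group $H^0$, which gives $H^0=1$ at once.
\item The phrase ``apply the same argument to its Lie quotients'' must refer to the compact Lie quotients of the compact \emph{open} group $U=C$, passing to identity components and further finite-index subgroups of $\G$. It cannot refer to quotients of $H^0$, in which $\rho(\G)$ need not be dense.
\item The congruence-completion remark is not part of the statement and overclaims in general. The congruence kernel is finite but need not be trivial: for $SL_n(O_K)$ with $n\geq 3$ and $K$ imaginary quadratic it is the nontrivial group $\mu_K$.
\end{itemize}
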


\begin{proof} Let $N$ be the maximal closed normal amenable subgroup of the group $H^0$,  the connected component of identity in $H$. Then by theorem \ref{semisimplequotient}, $Q= H^0/N$ is a product of non-compact simple Lie groups and  $H/N=Q\times H/H^0$ is a direct product. Hence  the image of $\G$ in $H/H^0$ is also dense. Being totally disconnected,  $H/H^0$ contains a compact open subgroup $V$ which is commensurated by $H/H^0$. Let $q: \G \ra H\ra H/H^0$ be the composite map. Then $\D=q^{-1}(V)$ is commensurated by  $\G$, and by the main theorem, $\D$ has finite index in $\G$. $V$ has finite index in $H/H^0$. Therefore, $H/H^0$ is compact. \\

By the proposition proved above, $Q$ is trivial, and hence by Theorem \ref{semisimplequotient} $H^0=N$ is amenable, and since $H/H^0$ is compact by the preceding paragraph, $H$ is also amenable and contains $\rho (\G)$ as a dense subgroup. \\

But $\G$ being a lattice in a higher rank lattice in a simple Lie group of real rank greater than one, has Kazhdan's property (T) and hence so does $\rho (\G)$; this means that $H$ is both amenable and has property (T) and is therefore compact.  \\

Let $\theta : H \ra GL_n(\C)$ be a complex finite dimensional representation. The image $Im (\theta)$ of $\theta $ is also  compact and hence a subgroup of finite index in $\G$ maps to a connected compact group $C$. Since  $C$ is an almost direct product of a torus and a connected semi-simple group, the above lemma shows that the semi-simple part is trivial and hence $C$ is a torus. But since a subgroup of finite index in $\G$ has property (T), it follows that the image cannot be a torus, and hence $C$ is trivial. Therefore, the image of any finite dimensional representation $\theta $ is finite and hence $H$ is profinite. 

\end{proof}

Theorem \ref{rigidity} follows as a corollary since for $n\geq 3$, $SL_n(\Z)$ is a lattice in a higher rank simple group, namely $SL_n(\R)$. Moreover, since for $n\geq 3$, $SL_n(\Z)$ has the congruence subgroup property, it follows that $H$ is a quotient of the congruence completion of $\G$.

\subsection{Higher Rank S-Arithmetic Lattices}

For the sake of completeness, we state the corresponding result for $S$-arithmetic groups as well. Fix a finite set $I$ and for each $i\in I$, let $k_i$ be a local field of characteristic zero, let $G_i$ be an absolutely almost simple connected linear algebraic group defined over $k_i$; then $G_i(k_i)$ is a locally compact group in a natural fashion. Assume $k_i-rank (G_i) \geq 1$ for each $i $ (i.e. $G_i(k_i)$ is non-compact).  Let $\Gamma \subset G:= \prod _{i\in I} G_i(k_i)$ be an irreducible lattice. Define $rank (G)=\sum k_i-rank (G_i)$ and suppose $rank (G)\geq 2$. By the Margulis arithmeticity theorem, there exists a number field $K$, a finite set $S$ of places of $K$ including all the archimedean places  of $K$, and a homomorphism $\phi : \prod _{v\in S} G(K_v) \ra G$ with compact kernel and compact cokernel such that the image $\phi (G(O_S))$ and $\Gamma $ are commensurable. \\

Let $G_i^* \ra G_i$ be the simply connected cover of $G_i$; then the image of $G_i^*(k_i) $ in $G_i(k_i)$ is cocompact and the quotient $Q_i$ is a compact torsion group. Hence,  a finitely generated subgroup of $Q_i$ is finite. Since the lattice $\G$ of the preceding paragraph is finitely generated, it follows that after replacing it by a finite index subgroup if necessary, we  assume, as we may,  that each $G_i$ is simply connected.  We may also assume that $G$ is simply connected. We consider the subset $S_n\subset S$ consisting of those places of $S$ where $G(K_v)$ is non-compact; then the map $\prod _{v\in S_c} G(K_v) \ra \prod _{i\in I} G_i(k_i)$ may be seen to be an isomorphism, induced by an  isomorphism $K_v\ra k_i$ of local fields for some $v\in S_n$ and an isomorphism $G\ra G_i$ of simply connected algebraic groups over $k_i$. \\

We may write $S_n(\infty)$ and $S_n(f)$ for the set of archimedean and finite places of $S_n$. Suppose $T\subset S_n$ is a {\it proper subset}. Then strong approximation then says (recall that $G$ is assumed to be simply connected) that  the projection of $\G$ to the factor $G_T=\prod _{v\in T} G(K_v)$ is dense, and any quotient $G_T/N_T$ of $G_T$ by a {\it central normal subgroup}  also contains $\G$ as a dense subgroup. We will refer to the embeddings of $\G$ into $G_T/N_T$ (as $T$ varies) as the standard dense embeddings. We let $T(f)$ be the set of finite places in $T$ and consider $G_{T(f)}/N_{T(f)}$. \\

We will say that a lattice $\G \simeq G(O_S)$ as above has the standard description of commensurated subgroups if every infinite subgroup $N\subset \G$ which is commensurated by $\G$ is of the form $G(O_S')$ for some subset $S'\subset S$ of places containing all the archimedean completions of $K$. IT is easy to show from this that every locally compact totally disconnected group $H$ containing $\G$ densely is of the form $G_{T(f)}/N_{T(f)}$ for some $T(f)\subset T$ and $N_T$ as in the preceding paragraph. In fact, more is true: 

\begin{theorem} Suppose $\G$ is a higher rank $S$-arithmetic group as above and suppose $\G$ has the standard description of commensurated subgroups. Let $\rho : \G \ra H$ be a homomorphism with dense and non-discrete image. If $N\subset H^0$ is the maximal normal amenable subgroup, then the quotient $H/N$ is of the form $G_T/N_T$ for some $T$ as above. 
\end{theorem}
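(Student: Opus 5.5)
Our plan is to split $H/N$ into its connected and totally disconnected parts and identify each part separately. First we apply Theorem \ref{semisimplequotient}: after passing to an open subgroup of finite index in $H$ (and correspondingly a finite index subgroup of $\G$, which is harmless since finite-index subgroups of $\G$ are again higher rank $S$-arithmetic groups with the standard description of commensurated subgroups), we have
\[H/N = Q\times H/H^0,\]
with $Q$ a connected centre-less semisimple Lie group without compact factors. Projecting $\rho(\G)$ to each factor gives dense homomorphisms $\G\ra Q$ and $\G\ra H/H^0$. We then identify the two factors separately and check that they glue to a single standard quotient $G_T/N_T$.

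\emph{Archimedean part.} Write $Q=\prod Q_j$ with each $Q_j$ simple and non-compact. For each $j$ the composite $\G\ra Q_j$ has Zariski dense image in the adjoint algebraic group of $Q_j$. Margulis superrigidity (Theorem (C), Chapter VIII, as in the Lemma above) then gives a field embedding $K\ra k_j\in\{\R,\C\}$, which corresponds to an archimedean place $v_j$, and shows that $\G\ra Q_j$ agrees, on a finite index subgroup and up to a central character, with a continuous map $G(K_{v_j})\ra Q_j$ with central kernel. Non-compactness of $Q_j$ forces $v_j\in S_n(\infty)$. Density of the joint image forces the $v_j$ to be pairwise distinct. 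Hence $Q\cong G_{T_\infty}/N_{T_\infty}$ for some $T_\infty\subset S_n(\infty)$.

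\emph{Totally disconnected part.} If $\rho$ followed by projection to $H/H^0$ has discrete image, then $H/H^0$ is discrete. Otherwise $H/H^0$ contains a compact open subgroup $V$, commensurated by $H/H^0$, and its preimage in $\G$ is an infinite commensurated subgroup. By the standard description hypothesis this preimage is commensurable with $G(O_{S'})$ for some $S'\supset\infty$. Comparing Belyaev completions as in the discussion preceding the theorem, $H/H^0$ is then of the form $G_{T(f)}/N_{T(f)}$ with $T(f)=S\setminus S'$ consisting of finite places. Note also that $T(f)\subset S_n(f)$, since factors at compact places contribute only a compact (indeed finite-index-absorbed) piece.

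\emph{Gluing.} We put $T=T_\infty\cup T(f)$. The main obstacle is to show that the diagonal map $\G\ra Q\times H/H^0$ really is the standard embedding into $G_T/N_T$. Two things must be checked. First, $T$ must be a proper subset of $S_n$. This holds because otherwise the image of $\G$ would be discrete, being a lattice, which contradicts the non-discreteness of $\rho(\G)$. Second, the two factors must fit together compatibly, which amounts to controlling the central characters $\nu$ arising from superrigidity. Here we use that $\G$ has property (T), or alternatively that its abelianization is finite, so the $\nu$ have finite image and disappear after passing to finite index. Continuity of the extensions on each factor then gives a continuous surjection $G_T\ra H/N$ extending $\rho$ modulo $N$. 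Its kernel is a normal subgroup $N_T$ of $G_T$ such that $\G$ has non-discrete image in the quotient, so by the Margulis normal subgroup structure of $G_T$ this kernel is central. This completes the plan.
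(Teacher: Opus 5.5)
The paper gives no proof of this theorem. It is stated only ``for the sake of completeness'', and rests on the unproved sentence just before it: that every totally disconnected locally compact group containing $\G$ densely is of the form $G_{T(f)}/N_{T(f)}$. Your outline follows the natural template suggested by the simple-group case: Theorem \ref{semisimplequotient}, superrigidity for $Q$, and the standard description of commensurated subgroups for $H/H^0$. The archimedean identification and the gluing are fine in outline. The genuine gap is your totally disconnected step, where you invoke exactly that sentence. It cannot be repaired, because it is false, and so is the theorem as literally stated.

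Here is what comparing Belyaev completions actually gives. Let $\D=q^{-1}(V)$, which is commensurable with $G(O_{S'})$. The topology on $\G$ pulled back from $H/H^0$ is coarser than the Belyaev topology, so $q$ extends to a continuous surjection $\widehat{\G}_\D\ra H/H^0$. But $\widehat{\G}_\D$ contains the full profinite completion $\widehat{\D}$ as a compact open subgroup. By strong approximation, $\widehat{\D}$ maps onto an open subgroup of $\prod_{v\notin S}G(O_v)$. This compact factor is not part of $G_{S\setminus S'}$, and nothing in your argument forces it to die in $H/H^0$. The same objection applies to $G(K_v)$ for a compact non-archimedean place $v\in S$: it is neither absorbed by passing to finite index, as you claim, nor by $N$, since $N\subset H^0$ only swallows connected pieces.

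A concrete counterexample: take $\G=SL_3(\Z)$, so $S=S_n=\{\infty\}$. By Theorem \ref{maintheorem}, every infinite commensurated subgroup has finite index, so the standard description holds (read, as you do, up to commensurability). The inclusion $\rho: SL_3(\Z)\ra H=SL_3(\Z_p)$ is dense, since reduction mod $p^k$ is onto, and it is non-discrete. We have $H^0=N=\{1\}$. Yet the only proper subset $T\subset S_n$ is $T=\emptyset$, so the conclusion would force $H$ to be trivial. This is precisely the profinite alternative allowed in Theorem \ref{rigidity}.

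So the conclusion must be weakened to allow a compact totally disconnected factor. What your method does prove is that $H/H^0$ is a quotient of $\widehat{\G}_\D$. Granting the congruence subgroup property for $G(O_{S'})$, this is roughly $G_{S\setminus S'}\times\prod_{v\notin S}G(O_v)$ modulo a finite central subgroup. It does not give the statement as written.
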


\newpage 

\section{The proof of Theorem \ref{function field}}

\newpage 
\section{Acknowledgments} T.N.Venkataramana gratefully acknowledges the support of the Raja Ramanna Fellowship and the hospitality and fantastic working conditions at the ICTS, Bangalore, where this work was done.

\end{document}